\newcommand{\C}{\mathcal{C}}
\newcommand{\R}{\mathbb{R}}
\newcommand{\N}{\mathbb{N}}
\newcommand{\Ss}{\mathcal{S}}
\newtheorem{theorem}{Theorem}[section]
\newtheorem{lemma}[theorem]{Lemma}
\newtheorem{cor}[theorem]{Corollary}
\newtheorem{conj}{Conjecture}[section]
\newtheorem{op}{Open Problem}[section]
\theoremstyle{definition}
\newtheorem*{defi}{Definition}
\theoremstyle{remark}
\newtheorem{rem}[theorem]{Remark}
\newtheorem*{remark}{Remark}
\let\c@table\c@figure 
\let\ftype@table\ftype@figure 
\title[Spectral gaps of Robin Schr\"odinger operators with single-well potentials]{Spectral gaps of 1-D Robin Schr\"odinger operators with single-well potentials}
\author{Mark S. Ashbaugh and Derek Kielty}
\address{Department of Mathematics, University of Illinois, Urbana, IL 61801, U.S.A.}
\email{dkielty2@illinois.edu}
\address{Department of Mathematics
University of Missouri
Columbia, MO 65211, U.S.A.}
\email{ashbaughm@missouri.edu}
\keywords{spectral theory, spectral gaps, Schr\"odinger operators, Robin boundary conditions}
\subjclass[2010]{\text{Primary 34L15, 34L40; Secondary 34B24}}
\begin{document}
\maketitle

\begin{abstract}
We prove sharp lower bounds on the spectral gap of 1-dimensional Schr\"odinger operators with Robin boundary conditions for each value of the Robin parameter. In particular, our lower bounds apply to single-well potentials with a centered transition point. This result extends work of Cheng et al.\ and  Horv\'ath in the Neumann and Dirichlet endpoint cases to the interpolating regime. We also build on recent work by Andrews, Clutterbuck, and Hauer in the case of convex and symmetric single-well potentials. In particular, we show the spectral gap is an increasing function of the Robin parameter for symmetric potentials.
\end{abstract}


\section{\textbf{Introduction}}

The Schr\"odinger eigenvalue problem with Robin boundary conditions and parameter $\alpha$,
\begin{equation}
\label{eq:RobinProb}
\begin{cases}
      -u'' + Vu = \lambda u, \quad \text{on} ~ (-L/2,L/2) \\
      u'(-L/2) = \alpha u(-L/2) ~ \text{and} ~ u'(L/2) = -\alpha u(L/2),
\end{cases}
\end{equation} 
has a discrete spectrum of simple eigenvalues
\[\lambda_1 < \lambda_2 < \lambda_3 < \dots \to \infty.\]
In what follows $V : [-L/2,L/2]
\to \R$ is a bounded potential and $\alpha \in (-\infty,\infty]$. We use the convention that $\alpha = \infty$ means Dirichlet boundary conditions, $u(-L/2) = 0 = u(L/2)$. Notice that $\alpha = 0$ is the Neumann boundary condition. Let \[\Lambda(V,\alpha) = \lambda_2(V,\alpha) - \lambda_1(V,\alpha)\] 
denote the fundamental spectral gap of (\ref{eq:RobinProb}). In this paper we prove a family of sharp lower bounds on the spectral gap for each $\alpha$ in a certain range. Our main result concerns the class of single-well potentials (see Figure \ref{fig:CSW}).
\begin{defi}
A potential function $V$ is a \emph{single-well potential} on the closed interval $I$ if there is a $\tau \in I$ such that $V$ is nonincreasing for $x \leq \tau$ and nondecreasing for $x \geq \tau$. Such a number $\tau$ is called a \emph{transition point}.
\end{defi}

\begin{figure}
    \includegraphics[scale = .7]{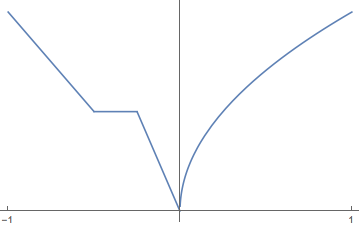}
    \caption{Single-well potential with transition point $\tau = 0$.}
    \label{fig:CSW}
\end{figure}

The following result generalizes work of Cheng et al.\ \cite[Theorem 1.1]{CKLL} for Neumann ($\alpha = 0$) and Horv\'ath \cite[Theorem 1.1]{Horvath} for Dirichlet boundary conditions ($\alpha = \infty$) to $\alpha \in [0,\infty]$.
\begin{theorem}
\label{thm:Horvath}
Let $V$ be a single-well potential. If $V$ has a centered transition point $\tau = 0$ then
\[\Lambda(V,\alpha) \geq \Lambda(0,\alpha), \quad \text{for each } \alpha \in [0,\infty]\]
with equality if and only if $V$ is constant. Moreover, for each $\alpha \in (-\infty,\infty]$ there are $V$ with $\tau \neq 0$ such that $\Lambda(V,\alpha) < \Lambda(0,\alpha)$. 
\end{theorem}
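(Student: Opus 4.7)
The plan is to prove $\Lambda(V,\alpha)\geq\Lambda(0,\alpha)$ by a homotopy argument coupled with an integral sign condition coming from the single-well structure, and then to produce explicit perturbative counterexamples for noncentered transition points. Since the gap is invariant under adding a constant to $V$, I may assume $V(0)=0$; the single-well hypothesis with centered transition then forces $V\geq 0$. I interpolate along $V_t:=tV$ for $t\in[0,1]$, each of which is again single-well with centered transition and nonnegative. Letting $u_{1,t},u_{2,t}$ denote the $L^2$-normalized first two eigenfunctions of (\ref{eq:RobinProb}) at potential $V_t$, the Hellmann--Feynman formula yields
\[
\frac{d}{dt}\Lambda(V_t,\alpha) = \int_{-L/2}^{L/2} V(x)\bigl(u_{2,t}(x)^2-u_{1,t}(x)^2\bigr)\,dx.
\]
The main inequality will follow once this derivative is shown to be nonnegative for every $t\in[0,1]$, with strict positivity for some $t$ when $V\not\equiv 0$, which also gives the equality case.

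The crux of the argument, and the step I expect to be the main obstacle, is to show that for any nonnegative single-well $V$ with transition at $0$,
\[
\int_{-L/2}^{L/2} V\,(u_2^2-u_1^2)\,dx \;\geq\; 0.
\]
Since $\alpha\geq 0$ the ground state $u_1$ is strictly positive on $[-L/2,L/2]$, so the ratio $w:=u_2/u_1$ is smooth and satisfies the weighted Neumann problem $-(u_1^2 w')' = \Lambda\, u_1^2 w$, $w'(\pm L/2)=0$, $\int u_1^2 w\,dx=0$, identifying $\Lambda(V,\alpha)$ as the first positive eigenvalue of this weighted problem (with an analogous identification at the Dirichlet endpoint via a singular weight). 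Using this weighted reformulation together with a Pr\"ufer-type analysis of the node of $u_2$ and the concentration of $u_1$, I plan to show that $u_2^2-u_1^2$ is negative in a neighborhood of $0$ and positive near $\pm L/2$. Writing $V(x)=\int_0^x V'(s)\,ds$ (first assuming $V$ absolutely continuous, then approximating) and integrating by parts, the required inequality will reduce to a monotone tail statement of the form $\int_{|x|\geq s}(u_2^2-u_1^2)\,dx\geq 0$ for all $s\in[0,L/2]$, which captures the right concentration and can be extracted from the weighted eigenvalue problem above.

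For the second assertion, fix $\alpha\in(-\infty,\infty]$ and take a smooth nonnegative bump $V_\epsilon$ of size $\epsilon$ supported in a small subinterval $[a,a+\delta]\subset(-L/2,0)$, vanishing at the endpoints of its support, so $V_\epsilon$ is single-well with transition $\tau_\epsilon\in(a,a+\delta)\neq 0$. First-order perturbation theory at $V\equiv 0$ gives
\[
\Lambda(V_\epsilon,\alpha)-\Lambda(0,\alpha)=\int V_\epsilon\bigl(\widetilde u_2^2-\widetilde u_1^2\bigr)\,dx+O(\epsilon^2),
\]
where $\widetilde u_1,\widetilde u_2$ are the $L^2$-normalized eigenfunctions of the constant problem. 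Since $\widetilde u_2^2-\widetilde u_1^2$ changes sign on $(-L/2,L/2)$ for every $\alpha\in(-\infty,\infty]$, I place $[a,a+\delta]$ inside a region where this difference is strictly negative, yielding $\Lambda(V_\epsilon,\alpha)<\Lambda(0,\alpha)$ for all sufficiently small $\epsilon>0$.
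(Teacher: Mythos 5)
Your outline for the main inequality has a genuine gap at exactly the step you flag as the crux, and the reduction you propose is not the right one. Writing $g=u_{2,t}^2-u_{1,t}^2$ and decomposing $V$ via $V(x)=\int_0^xV'$, Fubini turns $\int Vg$ into $\int_0^{L/2}V'(s)\bigl(\int_s^{L/2}g\bigr)ds+\int_{-L/2}^0(-V'(s))\bigl(\int_{-L/2}^sg\bigr)ds$. Because the increasing part of $V$ on $[0,L/2]$ and the decreasing part on $[-L/2,0]$ are completely independent, you need the \emph{one-sided} tail bounds $\int_s^{L/2}g\geq 0$ for all $s\in[0,L/2]$ and $\int_{-L/2}^sg\geq 0$ for all $s\in[-L/2,0]$ separately; your two-sided statement $\int_{|x|\geq s}g\geq 0$ only suffices for symmetric $V$. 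These one-sided bounds hold with \emph{equality} at $s=0$ when $V_t$ is symmetric (e.g.\ at $t=0$), so they are borderline, and nothing in your sketch shows they persist along the homotopy once $V_t$ is asymmetric. Indeed, for $V=m\mathbf{1}_{(0,L/2)}$ the bound at $s=0$ is precisely the statement $\frac{d}{dm}\Lambda(m\mathbf{1}_{(0,L/2)},\alpha)\geq 0$, which the paper can only prove for $m$ up to a threshold $m_0$ (Lemma \ref{lem:Trig}, via the explicit transcendental equation $f_\alpha(t)=-f_\alpha(t-m)$) and must circumvent for larger $m$ by a direct eigenvalue comparison; the paper's numerics section explicitly leaves full monotonicity in $m$ open. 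Your claim that $g<0$ near $0$ also presupposes $x_-<0<x_+$, which is not automatic for asymmetric single-well potentials — ruling out $x_->0$ is a separate argument in the paper (Case 2), and only for the \emph{minimizing} potential. The paper avoids all of this by a different route: compactness gives a minimizer in the class $\mathcal{S}_M$ (Lemma \ref{lem:MinEx}), first-order optimality forces the minimizer to be a two-level step $m\mathbf{1}_{(0,\pi/2)}$, and the resulting one-parameter family is handled by hand. A Hellmann--Feynman derivative computation along a homotopy cannot substitute for this without proving the tail inequalities for every single-well $V_t$, which is at least as hard as the theorem.

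The counterexample in your last paragraph is also wrong as stated: a nonnegative bump supported in $[a,a+\delta]$ and vanishing outside is increasing then decreasing, i.e.\ a single-\emph{barrier} potential, not a single-well one, so it does not satisfy the hypothesis of the theorem. The fix is easy — either take a downward dip ($-V_\epsilon$) placed where $\widetilde u_2^2-\widetilde u_1^2>0$ (near an endpoint), or, as the paper does, take the up-step $t\mathbf{1}_{(\tau,L/2)}$ with $\tau\in(-L/2,0)$, which is single-well with every transition point in $[-L/2,\tau]$ and gives $\frac{d}{dt}\Lambda|_{t=0}=\int_\tau^0(\widetilde u_2^2-\widetilde u_1^2)\,dx<0$ — but as written your perturbation does not produce an admissible $V$.
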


\noindent The lower bound $\Lambda(0,\alpha)$ is a strictly increasing function of $\alpha$. Moreover, $\Lambda(0,\alpha) \to 0$ as $\alpha \to -\infty$, $\Lambda(0,0) = \pi^2/L^2$, and $\Lambda(0,\infty) = 3\pi^2/L^2$ (see \cite[Lemmas 5.1 and 5.2]{L}).

In the case of symmetric single-well potentials Andrews, Clutterbuck, and Hauer recently extended the lower bounds on the spectral gap for Neumann and Dirichlet boundary conditions to Robin boundary conditions for the $p$-Laplacian with a symmetric single-well potential in \cite[Theorem 1.1]{ACH2}. In the case $p=2$, the following theorem expands on their results by allowing for a symmetric background potential and a varying boundary parameter. It also generalizes work of Ashbaugh and Benguria \cite{AB} from the Dirichlet case ($\alpha = \infty$) to $\alpha \in (-\infty,\infty]$. 
\begin{theorem}
\label{thm:SWS}
If $S$ is a symmetric potential, $V$ is symmetric single-well, $\alpha \in (-\infty,\infty]$, and $\gamma \geq 0$ then
\[\Lambda(S + V,\alpha + \gamma) \geq \Lambda(S,\alpha),\]
with equality if and only if $V$ is constant and $\gamma = 0$.
\end{theorem}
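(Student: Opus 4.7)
The plan is to exploit the symmetry of $-d^2/dx^2 + S + V$: because $S$ and $V$ are both even and the Robin conditions are symmetric, the operator commutes with $x\mapsto -x$, so each eigenfunction is even or odd. The positive ground state is necessarily even, and Courant's nodal theorem together with symmetry forces the second eigenfunction to vanish at $0$, hence be odd. It follows that $\lambda_1 = \mu_N$ and $\lambda_2 = \mu_D$, where $\mu_N$ and $\mu_D$ denote the principal eigenvalues of the half-interval problems on $[0,L/2]$ with, respectively, Neumann and Dirichlet boundary condition at $x=0$ and the original Robin condition $u'(L/2)=-\beta u(L/2)$ at $x=L/2$. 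Thus $\Lambda = \mu_D - \mu_N$, and the goal reduces to comparing two half-interval spectral gaps.

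Next I would split the target inequality as
\[
\Lambda(S+V,\alpha+\gamma) - \Lambda(S,\alpha) = \bigl[\Lambda(S+V,\alpha+\gamma) - \Lambda(S+V,\alpha)\bigr] + \bigl[\Lambda(S+V,\alpha) - \Lambda(S,\alpha)\bigr],
\]
and prove each bracket is nonnegative. By the Hadamard formula for Robin Sturm-Liouville problems, the $L^2$-normalized eigenvalues satisfy
\[
\frac{\partial \mu}{\partial \beta} = \frac{u(L/2)^2}{\|u\|^2}, \qquad \frac{d}{dt}\mu(S+tV,\beta) = \frac{\int_0^{L/2} V u^2\,dx}{\|u\|^2}.
\]
Integrating in $\beta \in [\alpha,\alpha+\gamma]$ and $t\in[0,1]$ respectively, both brackets reduce to a single structural claim: the density
\[
\rho(x) := \frac{u_N(x)^2}{\|u_N\|^2} - \frac{u_D(x)^2}{\|u_D\|^2}
\]
is positive on $[0,c)$ and negative on $(c,L/2]$ for a unique interior $c$. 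Given this sign pattern, the first bracket follows from $\rho(L/2)<0$, and the second from the Chebyshev-type identity $\int V\rho\,dx = \int (V-V(c))\rho\,dx \leq 0$, using that $V$ is nondecreasing on $[0,L/2]$ and $\int\rho\,dx=0$.

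To obtain the sign pattern of $\rho$, I would examine the Wronskian $\mathcal{W}(x):=u_N u_D' - u_N' u_D$. The eigenvalue equations give $\mathcal{W}'(x) = (\mu_N-\mu_D)u_N u_D < 0$ on $(0,L/2)$, since $\mu_D > \mu_N$ (Dirichlet dominates Neumann at $0$) and both principal eigenfunctions are positive. The endpoints yield $\mathcal{W}(0)=u_N(0)u_D'(0) > 0$ (from $u_D(0)=0$, $u_N'(0)=0$, and $u_D'(0)>0$ by Hopf) and $\mathcal{W}(L/2)=0$ (the common Robin condition). Hence $\mathcal{W}>0$ on $[0,L/2)$, so $(u_D/u_N)'=\mathcal{W}/u_N^2>0$: the ratio $u_D/u_N$ increases strictly from $0$ to a positive limit, which forces $\rho$ to change sign exactly once, at the unique point where $u_D^2/u_N^2=\|u_D\|^2/\|u_N\|^2$.

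For the equality case: strict positivity of the first bracket holds whenever $\gamma>0$, since $\rho(L/2)<0$ strictly; strict positivity of the second holds unless $V$ is constant, because $\int V\rho = 0$ together with $V$ monotone and $\rho$ having one interior sign change forces $V\equiv V(c)$ on the support of $\rho$, which is all of $[0,L/2]$ by real-analyticity and the fact that $\rho\not\equiv 0$ (otherwise $u_D\propto u_N$, contradicting the distinct boundary conditions at $0$). The case $\alpha=\infty$ is handled either by continuity as $\beta\to\infty$ or directly: $\mathcal{W}(L/2)=0$ still holds under Dirichlet. The main obstacle I anticipate is the endpoint analysis of $\mathcal{W}$ together with ruling out $\rho\equiv 0$; once the sign pattern of $\rho$ is established, the Chebyshev estimate and Hadamard integration are essentially routine.
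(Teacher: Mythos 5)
Your proof is correct and follows essentially the same route as the paper: Hadamard-type perturbation formulas in the potential and in the Robin parameter, combined with a Wronskian argument showing that the (normalized) difference $u_2^2-u_1^2$ changes sign exactly once on the half-interval, and then a Chebyshev-type rearrangement exploiting the monotonicity of $V$ there. The only difference is presentational: you fold to half-interval Neumann/Dirichlet problems via the even/odd decomposition, whereas the paper works on the full interval and extracts the identical sign pattern from the symmetric case of Lemma \ref{lem:Wrskn}.
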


Taking $V = 0$ in the above theorem implies monotonicity of the gap with respect to the boundary condition.
\begin{cor}
\label{cor:GapInc}
If $S$ is a symmetric potential then $\Lambda(S,\alpha)$ is a strictly increasing function of $\alpha \in (-\infty,\infty]$.
\end{cor}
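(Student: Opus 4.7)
The plan is to derive Corollary \ref{cor:GapInc} as a direct specialization of Theorem \ref{thm:SWS} with $V \equiv 0$. The zero potential is trivially symmetric and single-well (as a constant, any point may be taken as a transition point), so the hypotheses of Theorem \ref{thm:SWS} are satisfied. The theorem then delivers
\[\Lambda(S,\alpha+\gamma) \geq \Lambda(S,\alpha), \qquad \alpha \in (-\infty,\infty],\ \gamma \geq 0,\]
with equality if and only if $\gamma = 0$ (the condition ``$V$ is constant'' is automatic for $V \equiv 0$, so it drops out). This immediately yields strict monotonicity on the real line: for any $\alpha_1 < \alpha_2$ in $(-\infty,\infty)$, the choice $\gamma = \alpha_2 - \alpha_1 > 0$ gives $\Lambda(S,\alpha_1) < \Lambda(S,\alpha_2)$.

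It remains to extend strict monotonicity up to the Dirichlet endpoint $\alpha = \infty$. I would invoke the standard continuity in the Robin parameter for one-dimensional Schr\"odinger operators on a bounded interval: as $\beta \to \infty$, each Robin eigenvalue $\lambda_j(S,\beta)$ converges to the corresponding Dirichlet eigenvalue $\lambda_j(S,\infty)$, and in particular $\Lambda(S,\beta) \to \Lambda(S,\infty)$. For any finite $\alpha_1$, pick $\beta$ with $\alpha_1 < \beta < \infty$; then
\[\Lambda(S,\alpha_1) < \Lambda(S,\beta) \leq \Lambda(S,\infty),\]
where the strict first inequality comes from the $V \equiv 0$ application of Theorem \ref{thm:SWS} just carried out, and the second inequality is obtained by taking $\beta' \to \infty$ in the (non-strict) inequality $\Lambda(S,\beta) \leq \Lambda(S,\beta')$ valid for all $\beta < \beta' < \infty$.

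There is no substantive obstacle in this argument: the corollary is essentially a restatement of a special case of the preceding theorem. The only mild subtlety is the behavior at the Dirichlet endpoint, which is handled by the above continuity-plus-limit argument using standard Sturm--Liouville theory.
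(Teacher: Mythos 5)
Your proposal is correct and is essentially the paper's own argument: the corollary is obtained by setting $V \equiv 0$ in Theorem \ref{thm:SWS}, with strictness coming from the equality case ``$V$ constant and $\gamma = 0$.'' Your extra continuity argument at the Dirichlet endpoint is sound but already subsumed by the theorem, whose proof establishes strict monotonicity on all of $(-\infty,\infty]$ by exactly that continuity consideration.
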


In the case of convex potentials Andrews, Clutterbuck, and Hauer \cite[Theorem 1.4]{ACH2} proved lower bounds on the gap for $\alpha \in [-1/L,\infty]$ that generalizes work of Lavine \cite{Lav}. We generalize this result to asymmetric Robin boundary conditions. Let $\Lambda(V,(\alpha,\beta))$ denote the spectral gap of the Schr\"odinger operator with Robin boundary conditions $u'(-L/2) = \alpha u(-L/2)$ and $u'(L/2) = -\beta u(L/2)$ replacing the boundary condition in (\ref{eq:RobinProb}).

\begin{theorem}
\label{thm:VConvNew}
If $V$ is convex then \[\Lambda(V,(\alpha,\beta)) \geq \Lambda(0,\min\{\alpha,\beta\}), \quad \text{for each} ~ \alpha,\beta \in [-1/L,\infty],\] 
with equality if $V$ is constant and $\alpha = \beta$.
\end{theorem}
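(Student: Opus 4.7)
My plan is to reduce the asymmetric Robin problem to the symmetric one and then invoke [ACH2, Theorem~1.4], which gives $\Lambda(V,\gamma) \geq \Lambda(0,\gamma)$ for convex $V$ and $\gamma \in [-1/L,\infty]$. Without loss of generality, assume $\alpha \leq \beta$. It then suffices to prove the monotonicity
\[
\Lambda(V,(\alpha,\beta)) \geq \Lambda(V,(\alpha,\alpha)) = \Lambda(V,\alpha),
\]
because [ACH2, Theorem~1.4] applied with $\gamma = \alpha = \min\{\alpha,\beta\}$ then yields the claimed bound. Equality when $V$ is constant and $\alpha = \beta$ is immediate: a constant shifts $\lambda_1$ and $\lambda_2$ equally, so the problem is just the free symmetric Robin problem.

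The natural tool for the monotonicity is the Hellmann--Feynman formula. For the one-parameter family $\{H(V,(\alpha,t))\}_{t \geq \alpha}$ with $L^2$-normalized eigenfunctions $u_k(\cdot;t)$, differentiating the associated quadratic form in $t$ gives $\partial_t \lambda_k(t) = u_k(L/2;t)^2$ for $k=1,2$, whence
\[
\partial_t \Lambda(V,(\alpha,t)) = u_2(L/2;t)^2 - u_1(L/2;t)^2.
\]
The monotonicity claim therefore reduces to the pointwise boundary inequality $u_2(L/2)^2 \geq u_1(L/2)^2$. To approach this I would consider the ratio $v = u_2/u_1$, which is smooth because $u_1 > 0$; matching Robin coefficients for $u_1$ and $u_2$ at each endpoint force $v'(\pm L/2) = 0$, so $v$ solves the weighted Neumann eigenvalue problem $-(u_1^2 v')' = \Lambda\, u_1^2 v$ subject to the orthogonality $\int u_1^2 v\, dx = 0$. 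The target inequality becomes $v(L/2)^2 \geq 1$.

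The main obstacle is exactly this last inequality. In the symmetric case parity yields it automatically, but in the asymmetric convex setting one must genuinely exploit convexity of $V$ together with the threshold $\alpha,\beta \geq -1/L$. I would attempt to derive $v(L/2)^2 \geq 1$ from log-concavity (or a similar concavity) of $u_1$ for convex $V$ in this Robin regime, of the sort that underpins the proof of [ACH2, Theorem~1.4]: such concavity forces the weight $u_1^2$ to concentrate away from the endpoints, so the interior zero of $v$ imposed by the orthogonality is pushed inward and $v^2$ grows sufficiently toward $L/2$. The Dirichlet endpoint $\beta = \infty$ is then recovered by continuity of the eigenvalues as $\beta \to \infty$.
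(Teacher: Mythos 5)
Your overall architecture (reduce to the symmetric problem, then quote the Andrews--Clutterbuck--Hauer bound for convex potentials with equal Robin parameters) ends in the same place as the paper's proof, and your Hellmann--Feynman reduction of the monotonicity claim to the endpoint inequality $u_2(L/2;t)^2 \geq u_1(L/2;t)^2$ is correct as far as it goes. But there is a genuine gap at exactly the step you flag: you need this inequality for \emph{every} convex $V$ and every $t \in [\alpha,\beta]$ along the deformation, and nothing you cite delivers it. Lemma \ref{lem:Wrskn} only says that $\{u_2^2 > u_1^2\}$ has the form $(-L/2,x_-) \cup (x_+,L/2)$ with possibly $x_+ = L/2$, in which case the right-hand component is empty and the endpoint inequality fails or degenerates; for an asymmetric convex potential there is no parity to rule this out. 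Monotonicity of the gap in the boundary parameter for asymmetric potentials is precisely what the paper leaves open (Open Problem \ref{op:mono}), and the discussion in Section \ref{sec:OP} showing that for $(\alpha,\beta)=(\infty,0)$ the gap-minimizing convex potential is a \emph{nonconstant} linear function illustrates how delicate the asymmetric interplay is. Your suggestion that log-concavity of $u_1$ ``pushes the weight inward'' and hence forces $v(L/2)^2 \geq 1$ remains a heuristic, not a proof.

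The paper sidesteps the need for the endpoint inequality for all convex $V$ by a variational detour: it first produces a minimizer $(V_0,\alpha_0,\beta_0)$ of the gap over a compact class of convex potentials and boundary parameters (Lemma \ref{lem:MinEx}), shows by a perturbation toward a secant line that $V_0$ must be linear, $V_0 = ax$, and only then exploits the resulting first-order optimality condition $\int_I x(u_2^2-u_1^2)\,dx = 0$ in the slope $a$. It is under this extra stationarity hypothesis that the endpoint inequalities $u_2(\pm L/2)^2 > u_1(\pm L/2)^2$ are established (Lemma \ref{lem:u1u2}, by a one-line mean-zero argument that forces $x_- > -L/2$ and $x_+ < L/2$), and that is all that is needed to lower the larger Robin parameter down to $\min\{\alpha,\beta\}$ at the minimizer. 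To rescue your route you would have to either prove the endpoint inequality for all convex $V$ with $\alpha,\beta \geq -1/L$ (an interesting but unestablished claim) or restructure the argument around a minimizer as the paper does.
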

Letting $\alpha = \beta$ we recover Theorem 1.4 in \cite{ACH2}, which says: among convex potentials $\Lambda(V,\alpha)$ is minimal for the constant potential. In contrast to this result, the convex potential that minimizes $\Lambda(V,(\alpha,\beta))$ may not be the constant potential when $\alpha \neq \beta$ (see Section \ref{sec:OP}).

The above results hold for bounded potentials, but one can extend them to potentials in $L^1(-L/2,L/2)$ by an approximation argument at the cost of the ``only if" equality statements in Theorems \ref{thm:Horvath} and 
\ref{thm:SWS}. The results stated so far have only addressed single-well potentials, specifically, convex potentials and single-well potentials with symmetry assumptions. For symmetric single-well potentials it is possible to derive ``reverse forms" of the above inequalities for single-barrier potentials by using similar arguments. Single barrier potentials are those that first increase and then decrease.

\section{\textbf{Literature}}

There is a significant literature on spectral gaps of the Laplacian and Schr\"odinger operators, especially with Neumann and Dirichlet boundary conditions. For an overview of diameter lower bounds on the fundamental spectral gap see the write-up by Ashbaugh \cite{AshAIM} from the AIM workshop \cite{AIM1}. In particular, there are various results known in one dimension besides those mentioned in the previous section. Notably, Harrell and El Allali proved the novel lower bound $\Lambda(V,\infty) \geq \theta^2 \pi^2/L^2$, where $\theta^2 \approx 2.04575$ ($x = \theta \pi$ is the first positive root of $\tan(x) = x$) when $V$ is a general single-well potential, that is, one without convexity or symmetry assumptions \cite[Theorem 3.1]{HZ}. One should compare this with the lower bound of $3 \pi^2/L^2$ for the potentials in the Dirichlet cases of Theorems and \ref{thm:Horvath}, \ref{thm:SWS}, and \ref{thm:VConvNew}. Still in one dimension Abramovich generalized the result of Ashbaugh and Benguria to double-well potentials with symmetry conditions \cite{A}.

In higher dimensions the first lower bound on the spectral gap is due to Payne and Weinberger who showed that the spectral gap of the Neumann Laplacian on a bounded domain $\Omega$ is bounded from below by $\pi^2/D^2$, where $D$ is the diameter of $\Omega$ \cite{PW}. More recently Andrews and Clutterbuck proved that Dirichlet Schr\"odinger operators on a convex domain with a convex potential have a fundamental spectral gap that is always larger than $3 \pi^2 /D^2$ \cite{AC}. Less is known about the Robin Laplacian, even considering inequalities on other spectral quantities. For an introduction to the Robin Laplacian and an overview of what is known for it see Chapter 4 (by Bucur, Freitas, and Kennedy) of the book \cite{Henrot}. For interesting conjectures see Henrot \cite{Henrot} (as above) and Laugesen \cite{L}.

\section{\textbf{Open problems and questions}}
\label{sec:OP}

\subsection*{Lower bounds on gaps for \texorpdfstring{$\alpha < 0$}{Lg}:} The lower bound in Theorem \ref{thm:SWS} holds for $\alpha \in (-\infty,\infty]$, but the lower bounds in Theorems \ref{thm:VConvNew} and \ref{thm:Horvath} only hold for $\alpha \geq -1/L$ and $\alpha \geq 0$, respectively. Nonetheless, we expect the following conjecture to hold:
\begin{conj}
\label{conj:AllAlpha}
If $V$ is single-well with centered transition point $\tau = 0$ or convex then 
\[\Lambda(V,\alpha) \geq \Lambda(0,\alpha), \quad \text{for each} ~ \alpha \in (-\infty,\infty],\]
with equality if and only if $V$ is constant.
\end{conj}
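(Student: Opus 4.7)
The plan is to extend Theorems \ref{thm:Horvath} and \ref{thm:VConvNew} from their current ranges ($\alpha \in [0,\infty]$ and $\alpha \in [-1/L,\infty]$ respectively) to all $\alpha \in (-\infty,\infty]$ via a continuation argument in $\alpha$. Set $\Delta(V,\alpha) := \Lambda(V,\alpha) - \Lambda(0,\alpha)$; the cited theorems give $\Delta(V,\alpha_0) \geq 0$ at the endpoint $\alpha_0 \in \{0,-1/L\}$, so the task is to show $\Delta(V,\alpha) \geq 0$ for $\alpha < \alpha_0$.

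First I would compute $\partial_\alpha \Delta$ via Hellmann--Feynman. Since the $\alpha$-dependence of the Rayleigh quotient is linear in the boundary terms, with $L^2$-normalized eigenfunctions the formula $\partial_\alpha \lambda_n(V,\alpha) = u_n(-L/2)^2 + u_n(L/2)^2$ holds, yielding
\[
\partial_\alpha \Delta(V,\alpha) = \sum_{\sigma=\pm}\bigl(u_2(V;\sigma L/2)^2 - u_1(V;\sigma L/2)^2\bigr) - \sum_{\sigma=\pm}\bigl(u_2(0;\sigma L/2)^2 - u_1(0;\sigma L/2)^2\bigr).
\]
Next I would study the limit $\alpha \to -\infty$: both $\lambda_1$ and $\lambda_2$ behave like $-\alpha^2$, the eigenfunctions concentrate exponentially at the endpoints, and a WKB/Agmon analysis of tunneling through $V$ should yield $\Delta(V,\alpha) \to 0$ in this limit. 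Combined with the derivative formula, the conjecture reduces to establishing $\partial_\alpha \Delta(V,\alpha) \geq 0$ on the extended range, i.e.\ a boundary-value comparison between the first two eigenfunctions of $V$ and of the zero potential.

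An alternative route, available in both cases, uses the decomposition $V = V_s + V_a$ into even and odd parts. The even part $V_s(x) = \tfrac12(V(x)+V(-x))$ is symmetric and single-well with transition at $0$ (this follows in the single-well centered case from the monotonicity of $V$ on each half-interval, and in the convex case from the fact that the even part of a convex function is convex and symmetric), so Theorem \ref{thm:SWS} already gives $\Lambda(V_s,\alpha) \geq \Lambda(0,\alpha)$ for every $\alpha \in (-\infty,\infty]$. It would then suffice to prove the auxiliary inequality $\Lambda(V_s + V_a, \alpha) \geq \Lambda(V_s,\alpha)$ for antisymmetric $V_a$ such that $V_s + V_a$ still satisfies the original structural hypothesis. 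At first order in $V_a$ this vanishes by orthogonality (the eigenfunctions of $V_s$ are even or odd and $V_a u_n^2$ is odd), so the sign of the change is controlled by second-order perturbation theory.

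I expect the main obstacle to be the strongly boundary-concentrated regime $\alpha \to -\infty$: the eigenfunctions of $V$ and $0$ both localize at the endpoints, but for asymmetric $V$ they can localize very differently at $\pm L/2$, making pointwise sign control of $\partial_\alpha \Delta$ (equivalently the relevant second-order matrix elements) delicate. A plausible workaround is to replace pointwise control with an integrated Pr\"ufer phase comparison: defining $\phi$ by $\cot\phi = u'/u$, the Robin boundary conditions become $\phi(\pm L/2) \equiv \operatorname{arccot}(\pm\alpha) \pmod{\pi}$, and one would attempt to adapt Horv\'ath's phase-integral argument to the range $\alpha < 0$, where $u_1$ develops an interior minimum and is no longer monotone on each half-interval.
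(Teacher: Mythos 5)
This statement is stated in the paper as Conjecture \ref{conj:AllAlpha}: the paper offers no proof of it, only numerical evidence (Section \ref{sec:Num}), and its theorems cover only $\alpha\in[0,\infty]$ for centered single-well potentials and $\alpha\in[-1/L,\infty]$ for convex ones. Your proposal is a research plan rather than a proof, and both of its main routes meet concrete obstructions. Route 1 rests on showing $\partial_\alpha\Delta\geq 0$ below the known range, where $\Delta(V,\alpha)=\Lambda(V,\alpha)-\Lambda(0,\alpha)$; but the paper's own numerics contradict this. In Figure \ref{fig:NegGapM} the curves $m\mapsto\Lambda(m\mathbf{1}_{(0,\pi/2)},\alpha)$ for $\alpha=-2,-1,-0.1$ cross: past a crossing one has $\Lambda(m\mathbf{1},\alpha_1)>\Lambda(m\mathbf{1},\alpha_2)$ for $\alpha_1<\alpha_2<0$ while $\Lambda(0,\alpha_1)<\Lambda(0,\alpha_2)$ (the zero-potential gap is strictly increasing in $\alpha$), so $\Delta(m\mathbf{1},\cdot)$ strictly decreases somewhere on $(\alpha_1,\alpha_2)$. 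Hence the integrated Hellmann--Feynman comparison cannot close by monotonicity; moreover the anchor $\Delta\to0$ as $\alpha\to-\infty$ is a cancellation of two quantities whose leading behavior $-\alpha^2$ agrees, so nonnegativity near $-\infty$ requires the sign of the subleading term, which is exactly the content of the conjecture in that regime.

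Route 2 is circular precisely where it matters. The paper's proof of Theorem \ref{thm:Horvath} reduces (via Lemma \ref{lem:MinEx} and the two perturbation cases) to the extremal step potentials $m\mathbf{1}_{(0,\pi/2)}$; the even part of such a potential is the constant $m/2$, so $\Lambda(V_s,\alpha)=\Lambda(0,\alpha)$ and your auxiliary inequality $\Lambda(V_s+V_a,\alpha)\geq\Lambda(V_s,\alpha)$ \emph{is} the conjecture for exactly the potentials that remain after the reduction. Second-order perturbation theory cannot settle its sign either: the second-order correction to $\lambda_1$ is strictly negative, that to $\lambda_2$ has mixed-sign terms, and in any case a perturbative statement for small $V_a$ does not yield the inequality for the full class. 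For reference, the place the paper's own method breaks for $\alpha<0$ is the sign analysis in Lemma \ref{lem:Trig}(ii): the estimates on $f_\alpha'$ in (\ref{eq:fDeriv}) and on $f_\alpha'(t_2-m)-f_\alpha'(t_2)$ in (\ref{eq:ft_2}) use $\alpha\geq0$ (e.g.\ dropping the term $2\alpha\sqrt{t}\,(1-\cos(\sqrt{t}\pi))$ and the facts $h>0$, $h'\leq0$). A genuine attack would most plausibly extend that transcendental-equation analysis to $\alpha<0$, which your closing Pr\"ufer-phase remark gestures at but does not carry out.
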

\noindent When $V$ is a single-well potential with centered transition point the conjecture is supported by numerical calculations (see Section \ref{sec:Num} and Figures \ref{fig:NegGapM} and \ref{fig:PosGapM}).

\subsection*{Asymmetric Robin conditions:} In light of Theorem \ref{thm:VConvNew}, it is natural to consider the problem: 
\begin{op}
Let $\alpha,\beta \in (-\infty,\infty]$ with $\alpha \neq \beta$. Determine the minimizers of $\Lambda(V,(\alpha,\beta))$ among convex potentials $V$. Equivalently, determine the number $a \in \R$ that minimizes $\Lambda(ax,(\alpha,\beta))$ as a function of $\alpha$ and $\beta$.
\end{op}

When $\alpha = \beta \geq -1/L$ Theorem \ref{thm:VConvNew} shows the minimizer is the constant potential, but when $\alpha \neq \beta$ we expect that the minimizer is a \emph{non-constant} linear potential. In particular, when $\alpha = \infty$ and $\beta = 0$ we show the minimizer is of the form $V(x) = ax$ for some $a \neq 0$.

To see this, note that the first part of the proof of Theorem \ref{thm:VConvNew} shows that the minimizer among convex potentials exists and is a linear potential. Next observe that when $V \equiv 0$ the $L^2$-normalized eigenfunctions are $u_1(x) = C\sin(\pi (x + L/2)/2L)$ and $u_2(x) = C\sin(3\pi (x + L/2)/2L)$ where $C^2 = 2/L$. Finally, Lemma \ref{lem:Deriv} shows that
\[\frac{d}{da} \bigg|_{a = 0}\Lambda(ax,(\infty,0)) =  C^2\int_{-L/2}^{L/2} x (u_2(x)^2 - u_1(x)^2) \, dx < 0,\]
so that $\Lambda(ax,(\infty,0)) < \Lambda(0,(\infty,0)) = 2 \pi^2/L^2$ for small $a > 0$. A similar calculation would likely show that the minimizer is also a non-constant linear potential for general $\alpha \neq \beta$. 

An analogous argument also shows that among single-well potentials with centered transition point (as in Theorem \ref{thm:Horvath}) the minimizer is a step function of the form $m \mathbf{1}_{(0,L/2)}(x)$ for some $m \neq 0$ when the mixed Dirichlet--Neumann boundary conditions are imposed. In a similar direction, Harrell and El Allali also observed when attempting to minimize $\Lambda(V_0 + V,\infty)$ over convex $V$ for the asymmetric background potential $V_0(x) = x$, the minimizer is $-x + b$ rather than a constant potential \cite[p.\ 13]{HZ}.

\subsection*{Gap monotonicity:} Corollary \ref{cor:GapInc} shows that $\alpha \mapsto \Lambda(V,\alpha)$ is an increasing function when $V$ is symmetric. One would also like to know if monotonicity holds for asymmetric potentials, especially single-well and convex ones. This leads us to the following problem: 
\begin{op}
\label{op:mono}
Determine a class of potentials $V$ for which $\alpha \mapsto \Lambda(V,\alpha)$ is monotone for $\alpha \geq 0$.
\end{op}
\noindent This problem is only stated for $\alpha \geq 0$ because numerical computation shows $\alpha \mapsto \Lambda(m \boldsymbol{1}_{(0,\pi/2)},\alpha)$ is not monotone for $\alpha < 0$ (see Figure \ref{fig:GapAlphaPlot} in Section \ref{sec:Num}).

Interestingly, Smits observed in one dimension that even the higher gaps (without potential) $\alpha \mapsto (\lambda_j - \lambda_i)(0,\alpha)$ are  increasing functions for $\alpha > 0$ whenever $j > i$ \cite[\S 4]{Smits}. This suggests that Corollary \ref{cor:GapInc} may hold for higher gaps as well. 

Corollary \ref{cor:GapInc} is similar in spirit to a conjecture due to Smits in higher dimensions. To state the conjecture let $\lambda^{\Omega}_j(\alpha)$ for $j = 1,2,3, \dots$ denote the Robin eigenvalues of $-\Delta$ with parameter $\alpha$ on a bounded Lipschitz domain $\Omega$ and $\Lambda^{\Omega}(\alpha) = (\lambda^{\Omega}_2 - \lambda^{\Omega}_1)(\alpha)$ its fundamental spectral gap.

\begin{conj}[Smits \protect{\cite[\S 4]{Smits}}]
If $\Omega$ is a convex bounded domain then $\Lambda^{\Omega}(\alpha)$ is an increasing function for $\alpha \geq 0$.
\end{conj}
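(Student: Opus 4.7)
The plan is to begin with a Feynman-Hellmann reduction. For the Robin Laplacian, if $u_j(\alpha)$ is an $L^2(\Omega)$-normalized eigenfunction for the simple eigenvalue $\lambda_j^\Omega(\alpha)$, then
\[
\lambda_j^\Omega(\alpha) = \int_\Omega |\nabla u_j|^2 \, dx + \alpha \int_{\partial\Omega} u_j^2 \, dS,
\]
and differentiating via the envelope theorem (only the explicit $\alpha$-dependence contributes) gives $\tfrac{d\lambda_j^\Omega}{d\alpha} = \int_{\partial\Omega} u_j^2 \, dS$. Consequently
\[
\frac{d\Lambda^\Omega}{d\alpha} = \int_{\partial\Omega} \bigl( u_2^2 - u_1^2 \bigr) \, dS,
\]
so the conjecture is equivalent to the claim that the second Robin eigenfunction has at least as much boundary $L^2$-mass as the first, for every convex $\Omega$ and every $\alpha \geq 0$.

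My strategy would be to analyze the two endpoints and then propagate. At the Neumann endpoint $\alpha = 0$, the ground state is constant, so the inequality becomes the boundary-trace estimate $\int_{\partial\Omega} u_2^2 \, dS \geq |\partial\Omega|/|\Omega|$ for the second Neumann eigenfunction on a convex domain, which one can attempt via a rearrangement or a Payne-Weinberger style mean-value argument using the nodal-domain structure of $u_2$. In the Dirichlet limit $\alpha \to \infty$ the boundary values degenerate like $1/\alpha$, but the ratio on $\partial\Omega$ is controlled by the Dirichlet data: $u_j \sim -\alpha^{-1}\partial_\nu u_j^D$, so the limit reduces to $\int_{\partial\Omega} (\partial_\nu u_2^D)^2 \geq \int_{\partial\Omega} (\partial_\nu u_1^D)^2$, a Hopf-lemma/boundary-Harnack style comparison on convex domains. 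To interpolate, I would try the coupling/modulus-of-continuity technique of Andrews-Clutterbuck applied to the quotient $v = u_2/u_1$: log-concavity of $u_1$ (Brascamp-Lieb) on a convex domain yields convexity properties of $-\log u_1$, and one would seek a one-dimensional model with Robin parameter $\alpha$ that controls $v$ up to the boundary, thereby producing a pointwise comparison $u_2^2 \geq u_1^2$ on $\partial\Omega$ (or at least in an integrated sense).

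The hard part is controlling the geometry of the nodal set of $u_2$ near $\partial\Omega$. In one dimension with a symmetric potential (Corollary \ref{cor:GapInc} above) the symmetry pins the nodal point of $u_2$ at the center, so $u_2^2 \geq u_1^2$ at the two endpoints is essentially transparent and the coupling argument closes. In higher dimensions, the nodal hypersurface of $u_2$ can meet $\partial\Omega$ in complicated configurations not ruled out by convexity of $\Omega$ alone, and $u_2^2 - u_1^2$ genuinely changes sign across the boundary. The main obstacle is therefore producing an \emph{integrated} comparison on $\partial\Omega$ without pointwise control, presumably by combining global rigidity from the Andrews-Clutterbuck gap theorem (which controls $\Lambda^\Omega(\infty) \geq 3\pi^2/D^2$) with a continuity argument that rules out $\tfrac{d\Lambda^\Omega}{d\alpha}$ first crossing zero at an interior $\alpha$; this continuation-style obstruction, rather than the Feynman-Hellmann reduction itself, is what has kept Smits's conjecture open.
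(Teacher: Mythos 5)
This statement appears in the paper as an \emph{open conjecture} attributed to Smits, not as a theorem: the paper records only that it is known for intervals, disks, and rectangular boxes, and that in two dimensions the endpoint comparison $\Lambda^{\Omega}(0)<\Lambda^{\Omega}(\infty)$ follows by combining the Ba{\~n}uelos--Burdzy upper bound on the Neumann gap with the Andrews--Clutterbuck lower bound on the Dirichlet gap. So there is no proof in the paper to compare against, and your proposal does not supply one; it is a research program in which every essential step is left unestablished, as your own closing sentence concedes. Your Feynman--Hellmann reduction to the boundary-trace inequality $\int_{\partial\Omega}(u_2^2-u_1^2)\,dS\ge 0$ is the correct and standard first step --- it is the higher-dimensional analogue of the $\alpha$-derivative computation via Lemma \ref{lem:Deriv} in the proof of Theorem \ref{thm:SWS}, where symmetry of the one-dimensional potential pins the zero of $u_2$ at the center and makes the boundary comparison pointwise --- but everything after that reduction is missing.

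Concretely: (1) the derivative formula for $\lambda_2^{\Omega}(\alpha)$ requires simplicity, which fails for the disk (one of the two cases Smits actually settled) and can fail by level crossings on general convex domains, so even the reduction needs a one-sided/branch argument you do not give; (2) at $\alpha=0$ the required estimate $\int_{\partial\Omega}u_2^2\,dS\ge |\partial\Omega|/|\Omega|$ for the second Neumann eigenfunction of a convex domain is itself an unproven (and to our knowledge open) trace inequality, and ``rearrangement or a Payne--Weinberger style mean-value argument'' is not an argument; (3) in the Dirichlet limit the unweighted comparison $\int_{\partial\Omega}(\partial_\nu u_2)^2\,dS\ge\int_{\partial\Omega}(\partial_\nu u_1)^2\,dS$ is likewise unproven --- the natural tool here, the Rellich--Pohozaev identity $\int_{\partial\Omega}(x\cdot\nu)(\partial_\nu u_j)^2\,dS=2\lambda_j$, yields the comparison only with the weight $x\cdot\nu$, not without it; and (4) the Andrews--Clutterbuck modulus-of-continuity machinery controls the interior oscillation of $u_2/u_1$ but provides no mechanism for an integrated sign condition on the boundary trace of $u_2^2-u_1^2$, and your final ``continuation'' step contains no obstruction to $\frac{d\Lambda^{\Omega}}{d\alpha}$ vanishing at an interior value of $\alpha$. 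The conjecture therefore remains exactly as open after your proposal as before it.
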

The conjecture is known for intervals and disks by Smits \cite{Smits} and for rectangular boxes (i.e.\ product of intervals) for $\alpha \in \R$ in all dimensions by Laugesen \cite[Theorem 2.1]{L}. In two dimensions monotonicity holds for general $\Omega$ at the endpoints with the uniform bounds:
\[\Lambda^{\Omega}(0) < 4j_0^2/D^2  < 3\pi^2/D^2 \leq \Lambda^{\Omega}(\infty).\] 
The upper bound on the Neumann gap $\Lambda^{\Omega}(0)$ follows from work of Ba{\~n}uelos and Burdzy \cite[Corollary 2.1]{BB}, where $j_0 \approx 2.4048$ is the first positive root of the Bessel function $J_0$. The lower bound on the Dirichlet gap $\Lambda^{\Omega}(\infty)$ is due to Andrews and Clutterbuck \cite[Corollary 1.4]{AC}. In dimensions three and larger there are $\Omega$ for which $\Lambda^{\Omega}(0) > 3 \pi^2/D^2$ so this reasoning cannot prove $\Lambda^{\Omega}(0) < \Lambda^{\Omega}(\infty)$ in higher dimensions.

\section{\textbf{Preliminary lemmas}}
For notational ease let $I = (-L/2,L/2)$. We will denote weak eigenfunctions of (\ref{eq:RobinProb}) with Robin parameters $\alpha$ and $\beta$ at $x = \pm L/2$ corresponding to $\lambda_j$ by $u_j \in H^1(I)$. Without loss of generality, we assume that each $u_j$ is $L^2$-normalized, $u_1 > 0$, and $u_2$ is positive near $x = -L/2$ by multiplying (by $-1$ if necessary). Let $x_0 \in I$ denote the unique zero of $u_2$. The following lemma is well-known and the basic argument for it can be found in \cite{AB} for Dirichlet boundary conditions.
\begin{lemma}
\label{lem:Wrskn}
Let $\alpha,\beta \in (-\infty,\infty]$. If $V \in L^{\infty}(I)$ then there exist points $-L/2 \leq x_- < x_0 < x_+\leq L/2$ such that $u_2^2 > u_1^2$ on a set of the form $(-L/2,x_-) \cup (x_+,L/2)$ and $u_2^2 < u_1^2$ on $(x_-,x_+)$. If $V$ is also symmetric then $x_+ \in (0,L/2)$, $x_- = - x_+$, and $u_2^2 > u_1^2$ on $(-L/2,-x_+) \cup (x_+,L/2)$ with $u_2^2 < u_1^2$ on $(-x_+,x_+)$.
\end{lemma}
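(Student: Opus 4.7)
The plan is to use a classical Wronskian/ratio argument. Working on $I = (-L/2, L/2)$ with the Wronskian
\[ W := u_1 u_2' - u_2 u_1' \]
and the ratio $r := u_2/u_1$, I will show that $r$ is strictly monotone and then read the structure of $\{u_2^2 > u_1^2\} = \{|r| > 1\}$ off of this monotonicity via the factorization $u_2^2 - u_1^2 = u_1^2(r-1)(r+1)$.

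The monotonicity of $r$ rests on two facts. Differentiating $W$ and applying the two eigenvalue equations $-u_j'' + V u_j = \lambda_j u_j$ gives $W' = -\Lambda\, u_1 u_2$, where $\Lambda := \lambda_2 - \lambda_1 > 0$. Evaluating $W$ at the endpoints using the Robin conditions $u_j'(-L/2) = \alpha u_j(-L/2)$ and $u_j'(L/2) = -\beta u_j(L/2)$---or the Dirichlet condition $u_j = 0$ when $\alpha$ or $\beta$ equals $\infty$---yields $W(\pm L/2) = 0$. Combined with the sign data $u_1 > 0$ on $I$, $u_2 > 0$ on $(-L/2, x_0)$, and $u_2 < 0$ on $(x_0, L/2)$, the identity $W' = -\Lambda u_1 u_2$ forces $W$ to be strictly decreasing on $(-L/2, x_0)$ and strictly increasing on $(x_0, L/2)$, hence strictly negative on the open interval. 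Then $r' = W/u_1^2 < 0$, so $r$ is strictly decreasing with $r(x_0) = 0$.

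The desired sign structure follows at once: monotonicity of $r$ produces at most one point $x_- \in (-L/2, x_0)$ with $r(x_-) = 1$ and at most one point $x_+ \in (x_0, L/2)$ with $r(x_+) = -1$, and if either fails to exist I set it equal to the corresponding endpoint. Then $r > 1$ on $(-L/2, x_-)$, $|r| < 1$ on $(x_-, x_+)$, and $r < -1$ on $(x_+, L/2)$, which is exactly the claimed sign pattern for $u_2^2 - u_1^2$.

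For symmetric $V$ (with the implicit assumption $\alpha = \beta$, so that the operator is symmetric under $x \mapsto -x$), simplicity of the spectrum forces each eigenfunction to have a definite parity. The condition $u_1 > 0$ makes $u_1$ even, and $u_2$, having a single interior zero, must then be odd, so $x_0 = 0$ and $u_2^2 - u_1^2$ is even, yielding $x_- = -x_+$. The one step that does not come for free is the strict inclusion $x_+ < L/2$: monotonicity alone only gives $x_+ \in (0, L/2]$. I plan to upgrade this using the orthonormality identity $\int_{-L/2}^{L/2}(u_2^2 - u_1^2)\,dx = 0$; if $x_+ = L/2$ then $u_2^2 - u_1^2 < 0$ on all of $I$ (certainly at $x = 0$, where the difference equals $-u_1(0)^2 < 0$), contradicting the vanishing integral. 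I expect this small appeal to normalization to be the only substantive step beyond the standard Wronskian calculation, since the Dirichlet endpoints cause no real trouble: $W(\pm L/2) = 0$ and the inequality $r' < 0$ both hold on the open interval $I$ in all cases.
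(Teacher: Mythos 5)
Your proof is correct and follows essentially the same route as the paper: the Wronskian identity $W' = -(\lambda_2-\lambda_1)\,u_1u_2$, together with the vanishing of $W$ at both endpoints (equivalently, the paper's integral form $W(x) = -(\lambda_2-\lambda_1)\int_{-L/2}^{x}u_1u_2\,dt$ plus orthogonality), gives $W<0$ on $I$, hence $u_2/u_1$ strictly decreasing, from which the stated sign pattern of $u_2^2-u_1^2$ and the symmetric refinement follow. Your explicit appeal to $\int_I(u_2^2-u_1^2)\,dx=0$ to rule out $x_+=L/2$ in the symmetric case makes precise a step the paper passes over quickly, but it is the same argument.
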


\begin{proof}
We will show that $u_2/u_1$ is a decreasing function so that the set $\{u_2^2 > u_1^2\} = \{(u_2/u_1)^2 > 1\}$ must be a neighborhood of one or both of the boundary points. Note $x_0$ cannot be $-L/2$ or $L/2$. If $V$ is even then $u_1^2$ and $u_2^2$ are also both even and so we have that $\{u_2^2 > u_1^2\}$ is a neighborhood of both boundary points.

Since $V$ is bounded $u_j \in H^2(I)$ so that $u_j$ is twice differentiable a.e.\ and the eigenvalue equation holds for a.e.\ $x \in I$. By the fundamental theorem of calculus and the boundary condition at $x = -L/2$ we have, for $x \in I$,
\[\bigg( \frac{u_2}{u_1} \bigg)'(x) = u_1^{-2}(u_2' u_1 - u_2 u_1')(x) = u_1^{-2}\int_{-L/2}^x (u_2' u_1 - u_2 u_1')' \, dt.\]
Computing the derivative in the integrand and using the eigenvalue equation we have
\[(u_2' u_1 - u_2 u_1')(x) =  -(\lambda_2 - \lambda_1)\int_{-L/2}^{x} u_1u_2 \, dt < 0, \quad \text{for all} ~ x \in I,\]
since $u_1$ and $u_2$ are orthogonal on $I$ and $u_2$ only changes sign once there, completing the proof.
\end{proof}

Let $M \geq 0$ and $\underline{\alpha}, \overline{\alpha},\underline{\beta},\overline{\beta} \in (-\infty,\infty)$ be such that $\underline{\alpha} \leq \overline{\alpha}$ and $\underline{\beta} \leq \overline{\beta}$. Define the classes of potentials and Robin parameters
\[\Ss = \{(V,\alpha,\beta) \in L^{\infty}(I) \times [\underline{\alpha}, \overline{\alpha}] \times [\underline{\beta}, \overline{\beta}] ~:~ V \text{ single-well with } \tau = 0 \text{ and } 0 \leq V \leq M\}\]
and
\begin{equation}
\label{eq:ConvSet}
    \C = \{(V,\alpha,\beta) \in L^{\infty}(I) \times [\underline{\alpha}, \overline{\alpha}] \times [\underline{\beta}, \overline{\beta}] ~:~ V \text{ convex and } 0 \leq V \leq M\}.
\end{equation}
\begin{lemma}
\label{lem:MinEx}
The gap $\Lambda(V,(\alpha,\beta))$ has minimizers in each of the classes $\Ss$ and $\C$.
\end{lemma}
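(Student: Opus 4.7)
The plan is to apply the direct method of the calculus of variations. Let $(V_n, \alpha_n, \beta_n) \in \Ss$ (or $\C$) be a minimizing sequence for the gap. Since $[\underline{\alpha}, \overline{\alpha}] \times [\underline{\beta}, \overline{\beta}]$ is compact, after passing to a subsequence I may assume $\alpha_n \to \alpha^*$ and $\beta_n \to \beta^*$.

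The next step is to extract a limit of the potentials that stays in the class. For $\Ss$, each $V_n$ is monotone on $[-L/2, 0]$ and on $[0, L/2]$ with $0 \leq V_n \leq M$, so Helly's selection theorem applied on each half produces a further subsequence converging pointwise a.e.\ to a single-well function $V^*$ with $\tau = 0$ and $0 \leq V^* \leq M$; thus $(V^*, \alpha^*, \beta^*) \in \Ss$. For $\C$, each $V_n$ is convex with $0 \leq V_n \leq M$, so by standard convex analysis the $V_n$ enjoy uniform Lipschitz bounds on any compact subset of $I$; an Arzel\`a--Ascoli / diagonal argument then produces a subsequence converging locally uniformly on $I$ to a convex limit $V^*$ with $0 \leq V^* \leq M$, so $(V^*, \alpha^*, \beta^*) \in \C$. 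In either case, dominated convergence upgrades pointwise a.e.\ convergence to $V_n \to V^*$ in $L^p(I)$ for every $p < \infty$.

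To conclude, I invoke continuity of the first two eigenvalues under this convergence. From the min-max principle
\[
\lambda_j(V, (\alpha, \beta)) = \min_{\dim W = j} \max_{\substack{u \in W \\ \|u\|_{L^2} = 1}} \left( \int_I (|u'|^2 + V u^2) \, dx + \alpha \, u(-L/2)^2 + \beta \, u(L/2)^2 \right)
\]
over $j$-dimensional subspaces $W \subset H^1(I)$, plugging the eigenfunctions of the limit problem into the Rayleigh quotient for $(V_n, \alpha_n, \beta_n)$ gives the $\limsup$ bound $\limsup_n \lambda_j(V_n, (\alpha_n, \beta_n)) \leq \lambda_j(V^*, (\alpha^*, \beta^*))$. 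For the matching $\liminf$, I extract a weak $H^1$ limit of normalized eigenfunctions $u_{j,n}$, use the $L^1$-convergence of $V_n$ to pass to the limit in the potential term, and use the compact embedding $H^1(I) \hookrightarrow C(\overline{I})$ (valid in 1D) to pass to the limit in the boundary terms together with $\alpha_n \to \alpha^*$, $\beta_n \to \beta^*$. Thus $\lambda_j(V_n, (\alpha_n, \beta_n)) \to \lambda_j(V^*, (\alpha^*, \beta^*))$ for $j = 1, 2$, hence $\Lambda(V_n, (\alpha_n, \beta_n)) \to \Lambda(V^*, (\alpha^*, \beta^*))$, which is then the infimum.

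The main obstacle I expect is the uniform $H^1$ bound on normalized eigenfunctions needed for the $\liminf$ step: when $\underline{\alpha}$ or $\underline{\beta}$ is negative the Robin boundary term contributes a negative quantity and the quadratic form is not manifestly coercive. The standard remedy is the trace inequality $u(\pm L/2)^2 \leq \varepsilon \|u'\|_{L^2(I)}^2 + C_\varepsilon \|u\|_{L^2(I)}^2$ with $\varepsilon$ small; combined with $V_n \geq 0$ and the uniform bound on $\lambda_2(V_n, (\alpha_n, \beta_n))$ inherited from the minimizing sequence, this produces an a priori $H^1$ bound independent of $n$. Once that is in hand the remainder of the argument is routine, and the same proof handles both $\Ss$ and $\C$ with only the compactness step differing between them.
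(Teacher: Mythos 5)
Your proposal is correct and follows essentially the same route as the paper: Helly selection for $\Ss$ and Arzel\`a--Ascoli for $\C$ to get an $L^1$-convergent limit in the class, then weak $H^1$ compactness of normalized eigenfunctions (with the trace inequality handling negative Robin parameters) to pass to the limit in the weak eigenvalue equation. The only difference is cosmetic: you prove full continuity of $\lambda_1,\lambda_2$ by adding a min-max $\limsup$ bound, whereas the paper settles for lower semicontinuity of the gap, which already suffices; just make sure to record the orthogonality argument identifying the limit of the second eigenvalues as an eigenvalue of index at least two.
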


\begin{proof}
To begin we prove $\Ss$ and $\C$ are both sequentially compact for the product topology given by $L^1(I)$-convergence of $V$ and the standard topology on $[\underline{\alpha}, \overline{\alpha}] \times [\underline{\beta}, \overline{\beta}]$. Since $[\underline{\alpha}, \overline{\alpha}] \times [\underline{\beta}, \overline{\beta}]$ is compact it suffices to prove compactness for the first factor so let $\{V_n\}_n$ be a sequence in $\Ss$ or $\C$. For $\Ss$ the Helly selection theorem implies there is a subsequence of $\{V_n\}_n$ that converges pointwise to an element of $\Ss$. Since the potentials in $\Ss$ are uniformly bounded by the constant $M$ the dominated convergence theorem implies $L^1$-convergence holds. For $\C$ this follows because the potentials in $\C$ are uniformly equicontinuous on compact subsets of $I$ so the Arzela--Ascoli theorem implies that $\{V_n\}_n$ has a subsequence that converges uniformly on compact sets. This shows that if $\{(V_n,\alpha_n,\beta_n)\}_n$ in  $\Ss$ or $\C$ is a minimizing sequence for $\Lambda$ then there exists a convergent subsequence in the above topology. 

\bigskip

To show the gap has a minimizer it suffices to show the Robin gap is sequentially lower-semicontinuous under such limits. To see this let $\{(\lambda_n,u_n)\}_n$ be a sequence of eigenpairs of fixed index of the Schr\"odinger operator given by $\{(V_n,\alpha_n,\beta_n)\}_n$. Also assume that each $u_n$ has unit $L^2$-norm. We will show that $\{u_n\}_n$ is bounded in $H^1(-L/2,L/2)$ and use the Rellich--Kondrachov compactness theorem to extract a convergent subsequence. 

First we multiply the eigenvalue equation by $u_n$ and integrate by parts to find
\begin{equation}
\label{eq:Quad}
\int_I [(u_n')^2 + V_n u_n^2] \, dx + \alpha_n u_n(-L/2)^2 + \beta_n u_n(L/2)^2 = \lambda_n,
\end{equation}
since $u_n$ is $L^2$-normalized. Note that since $\lVert V_n \rVert_{L^{\infty}}$ and the Robin parameters are uniformly bounded in $n$ we know $|\lambda_n| \leq C$ for some $C$ independent of $n$ since the quadratic form on the left side of (\ref{eq:Quad}) is monotone in the potential and Robin parameters. Since $V_n \geq 0$ we have that
\begin{equation}
\label{eq:BndryVals}
\lVert u_n' \rVert_{L^2}^2 + \alpha_0 \{ u_n(-L/2)^2 + u_n(L/2)^2\} \leq C,
\end{equation}
where $\alpha_0 = \min\{\underline{\alpha},\underline{\beta}\}$. Note that when $\underline{\alpha},\underline{\beta} \geq 0$ we have that $\alpha_0 \geq 0$ and so the boundary terms can be dropped to show $\{u_n\}_n$ is bounded in $H^1(I)$. When $\alpha$ is negative a more delicate argument is necessary to show boundedness. This can be done by using the inequality
\[|u_n(x) - u_n(y)| \leq \lVert u_n' \rVert_{L^2}|x - y|^{1/2}, \quad \text{for each } x,y \in I\]
to estimate the boundary values of $u_n$ in terms of $\lVert u_n' \rVert_{L^2}$. Such an argument can be found in the book by Widom \cite[ Chapter V, Theorem 2]{Widom}.

Now we extract a subsequence such that $\lambda_n,\alpha_n,$ and $\beta_n$ converge respectively to $\lambda_{\infty} \in \R,\alpha_{\infty} \in [\underline{\alpha},\overline{\alpha}],$ and $\beta_{\infty}  \in [\underline{\beta},\overline{\beta}]$. The Rellich--Kondrachov compact embedding theorem implies there is a further subsequence such that $u_n \to u_{\infty}$ in $L^2(I)$ and $u_n \rightharpoonup u_{\infty}$ in $H^1(I)$. Note that the trace map that sends a function in $H^1(I)$ to its boundary values in $L^2(\{-L/2,L/2\})$ is compact. Equivalently, the trace map is completely continuous so that weak convergence of $u_n$ implies $u_n(\pm L/2) \to u_{\infty}(\pm L/2)$.

The weak formulation of the eigenvalue problem for each $n$ is
\[\int_I [u_n'v' + V_n u_n v] \, dx + \alpha_n u_n(-L/2)v(-L/2) + \beta_n u_n(L/2)v(L/2) = \lambda_n \int_I u_n v \, dx, \quad \text{for each } v \in H^1(I).\]
Using the above convergence and that $V_n \to V_{\infty}$ in $L^1(I)$ we can take $n \to \infty$ to find
\[\int_I [u_{\infty}'v' + V_{\infty} u_{\infty} v] \, dx + \alpha_{\infty} u_{\infty}(-L/2)v(-L/2) + \beta_{\infty} u_{\infty}(L/2)v(L/2) = \lambda_{\infty} \int_I u_{\infty} v \, dx, \quad \text{for each } v \in H^1(I).\]
Since $\lVert u_{\infty} \rVert_{L^2} = 1 \neq 0$, $u_{\infty}$ is a weak eigenfunction with eigenvalue $\lambda_{\infty}$. 

This shows that $\lambda_n$ converges to an element of the spectrum of the problem given by $(V_{\infty},\alpha_{\infty},\beta_{\infty})$. Moreover, the first eigenvalue of the sequence of problems given by $(V_n,\alpha_n,\beta_n)$ converges to the first eigenvalue of the problem given by $(V_{\infty},\alpha_{\infty},\beta_{\infty})$. This follows because ground states are characterized as the non-negative eigenfunctions. 

By extracting a final subsequence we can ensure there is a common subsequence along which the first and second eigenvalues of $(V_n,\alpha_n,\beta_n)$ converge to $\lambda_1(V_{\infty},(\alpha_{\infty},\beta_{\infty}))$ and some element of the spectrum of $(V_{\infty},(\alpha_{\infty},\beta_{\infty}))$. Additionally, since the eigenfunctions corresponding to distinct eigenvalues are orthogonal for each $n$ and each eigenspace is 1-dimensional their limits lie in distinct eigenspaces. In particular, this shows that $j \geq 2$ so that \[\Lambda(V_n,(\alpha_n,\beta_n)) \to \lambda_j(V_{\infty},(\alpha_{\infty},\beta_{\infty})) - \lambda_1(V_{\infty},(\alpha_{\infty},\beta_{\infty})) \geq \Lambda(V_{\infty},(\alpha_{\infty},\beta_{\infty})), \quad \text{ as } n \to \infty.\]
Thus, the gap is sequentially lower semi-continuous on $\Ss$ and $\C$.
\end{proof}

Now we prove a formula for the derivative of an eigenvalue as the potential and boundary parameters are varied. The main tool for the proof comes from a Banach space version of the implicit function theorem. The theory of calculus in Banach spaces and the implicit function theorem is developed by Buffoni and Toland in \cite[Part 1]{BT}.

\begin{lemma}
\label{lem:Deriv}
Assume $\{(V(\cdot,t),\alpha(t),\beta(t))\}_{t \in \R}$ is a family of bounded potentials such that $\frac{\partial^k V}{\partial t^k}(\cdot,t) \in L^{\infty}(I)$ and $\alpha$ and $\beta$ are $k$-times continuously differentiable in $t$, $k \geq 1$. If $\{(\lambda(t),u(\cdot,t))\}_{t \in \R}$ is a family of eigenpairs of a fixed index where $u(\cdot,t)$ is $L^2$-normalized then $\lambda(t) = \lambda(V(\cdot,t),\alpha(t),\beta(t))$ is $k$-times differentiable and
\[\frac{d \lambda}{dt}(t) = \int_I \frac{\partial V}{\partial t} u^2 \, dx +  \alpha'(t) u(-L/2)^2 + \beta'(t) u(L/2)^2.\]
\end{lemma}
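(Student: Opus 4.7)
The plan is to split the argument into two steps. First, I would apply the Banach space implicit function theorem to show that the branch $t \mapsto (\lambda(t), u(\cdot,t))$ is $k$-times continuously differentiable into $\R \times H^1(I)$. Second, I would differentiate the variational identity for $\lambda(t)$ and exploit the $L^2$-normalization of $u$ to cancel every term that contains $\dot u = \partial_t u$, leaving exactly the claimed formula. This is essentially a Feynman--Hellmann computation; the real work is in securing the differentiability.

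For the first step I would encode the eigenvalue problem together with the $L^2$-normalization as a zero of a map
\[F : L^\infty(I) \times \R^2 \times H^1(I) \times \R \to H^1(I)^\ast \times \R,\]
whose first component sends $(V,\alpha,\beta,u,\lambda)$ to the weak Robin Schr\"odinger operator applied to $u$ minus $\lambda u$, and whose second component sends $u$ to $\lVert u\rVert_{L^2}^2 - 1$. The map $F$ is $C^k$ jointly in its arguments because it is polynomial in $(u,\lambda)$ and affine in $(V,\alpha,\beta)$. At a simple eigenpair $(u_0,\lambda_0)$ the partial derivative in $(u,\lambda)$ is the bounded linear map $(h,\mu) \mapsto ((T-\lambda_0)h - \mu u_0,\, 2\langle u_0,h\rangle_{L^2})$, where $T$ denotes the Robin Schr\"odinger operator viewed as $H^1(I)\to H^1(I)^\ast$. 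Because $T-\lambda_0$ is self-adjoint Fredholm of index zero with kernel spanned by $u_0$, this linearization is a bijection onto $H^1(I)^\ast \times \R$: the constraint $\langle u_0,h\rangle=0$ identifies the orthogonal complement of the kernel, on which $T-\lambda_0$ is invertible, while the free parameter $\mu$ absorbs the cokernel $\mathrm{span}(u_0)$. The implicit function theorem of \cite{BT} then produces a unique $C^k$ branch through $(V(\cdot,0),\alpha(0),\beta(0),u_0,\lambda_0)$, which by uniqueness must agree with the given family.

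Given differentiability of $u$ and $\lambda$, I would test the weak eigenvalue equation against $v = u(\cdot,t)$ and invoke $\lVert u\rVert_{L^2}=1$ to obtain
\[\lambda(t) = \int_I \bigl[(u')^2 + V u^2\bigr]\, dx + \alpha(t)\, u(-L/2,t)^2 + \beta(t)\, u(L/2,t)^2.\]
Differentiating in $t$ yields the desired terms $\int_I V_t u^2\,dx + \alpha'(t)u(-L/2)^2 + \beta'(t)u(L/2)^2$ together with four additional terms each carrying a factor of $\dot u$, namely $2\int_I u'\dot u'\,dx + 2\int_I V u \dot u\,dx + 2\alpha u(-L/2)\dot u(-L/2) + 2\beta u(L/2)\dot u(L/2)$. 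This combination is exactly $2$ times the weak eigenvalue equation tested against $\dot u \in H^1(I)$, hence equals $2\lambda(t)\int_I u\,\dot u\,dx$. Differentiating $\int_I u^2\,dx \equiv 1$ forces $\int_I u\,\dot u\,dx = 0$, so this contribution vanishes. The main obstacle is verifying that the partial derivative of $F$ in $(u,\lambda)$ is invertible, which is where simplicity of the eigenvalue (combined with Fredholm theory for the Robin form) enters decisively; once differentiability of the branch is in hand, the rest is the cancellation just described.
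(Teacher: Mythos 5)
Your proposal is correct, and the Feynman--Hellmann half (test the weak equation against $v=u$, differentiate, recognize the $\dot u$-terms as twice the weak equation tested against $v=\dot u$, and kill them with $\int_I u\,\dot u\,dx=0$) is the same cancellation the paper performs. Where you genuinely diverge is in how you secure $C^k$ dependence of the eigenpair. The paper removes the $t$-dependence from the boundary condition by the gauge change $w=\rho u$ with $\rho$ built from $e^{-\alpha(t)x}$ and $e^{\beta(t)x}$ via a partition of unity; this converts the problem to a Neumann one posed on the fixed space $X\subset H^2(I)$, after which Fr\'echet differentiability of the operator family $t\mapsto L(t)\in\mathcal{L}(X,L^2(I))$ is checked directly and Proposition 3.6.1 of Buffoni--Toland (perturbation of simple eigenvalues) is invoked. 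You instead stay with the form (weak) formulation on $H^1(I)$, where the Robin parameters enter the bilinear form rather than the operator domain, so the underlying space is automatically $t$-independent and no gauge transformation is needed; the price is that you must verify by hand that the linearization of the augmented map (eigen-equation plus normalization) is an isomorphism of $H^1(I)\times\R$ onto $(H^1(I))^*\times\R$, which you do correctly via the Fredholm alternative: $T-\lambda_0$ is a compact perturbation of a coercive operator, its kernel is $\mathrm{span}(u_0)$ by simplicity, its range is the annihilator of $u_0$ by symmetry of the form, the free parameter $\mu$ fills the cokernel, and the normalization constraint fixes the kernel direction. Both arguments hinge on simplicity of the eigenvalue and on a Banach-space implicit function theorem; yours trades the integrating-factor bookkeeping and the difference-quotient estimate $\lVert D_hL-A\rVert\to 0$ for a one-shot nondegeneracy check, and has the mild additional virtue of producing $\dot u$ directly in $H^1(I)$, which is exactly the regularity needed to use $v=\dot u$ as a test function in the second step. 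The only point you pass over quickly, as does the paper, is the gluing of the local branches into the given global family of fixed index, which follows from continuity of the simple eigenvalues in $t$ and local uniqueness.
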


Let $X$ and $Y$ be Hilbert spaces and $(\mathcal{L}(X,Y),\lVert \cdot \rVert)$ denote the Banach space of bounded linear operators between $X$ and $Y$ equipped with the operator norm $\lVert \cdot \rVert$.

\begin{proof}

To prove that the eigenpairs are smooth in $t$ we transform the eigenvalue problem into one with Neumann boundary conditions so that the problem can be defined on a fixed Hilbert space $X$. Since each eigenvalue is simple we will then be able to apply Proposition 3.6.1 in \cite{BT} to conclude that each curve of eigenvalues and eigenfunctions of given index is smooth.

First let $\{\chi_1,\chi_2\}$ be a partition of unity subordinate to the cover $\{[-L/2,L/4),(-L/4,L/2]\}$ and define $\rho(x,t) = \chi_1(x) e^{-\alpha(t)x} + \chi_2(x) e^{\beta(t)x}$. It follows that $\rho,\partial_x \rho \in C^k([-L/2,L/2] \times \R)$ and $\rho$ is uniformly positive on $[-L/2,L/2] \times [a,b]$ for each $a<b$. A computation shows $w = \rho u$ satisfies
\begin{equation*}
\begin{cases}
      -w'' + (\log(\rho^2))'w' + \widetilde V w = \lambda w, \quad \text{on} ~ (-L/2,L/2) \\
      w'(\pm L/2) = 0,
\end{cases}
\end{equation*} 
where $\widetilde V = \frac{\rho''}{\rho} - 2(\frac{\rho'}{\rho})^2 + V$. After multiplying and dividing by the integrating factor $\mu(x,t) = \rho(x,t)^{-2}$ the first two terms can be combined to give a self-adjoint problem
\begin{equation*}
\begin{cases}
      -\frac{1}{\mu}(\mu w')' + \widetilde V w = \lambda w, \quad \text{on} ~ (-L/2,L/2) \\
      w'(\pm L/2) = 0,
\end{cases}
\end{equation*} 
Now we check that the operator valued function $L : \R \to \mathcal{L}(X,L^2(I))$ defined by
\[L(t)w = -\frac{1}{\mu}(\mu w')' + \widetilde Vw,\]
is $k$-times Fr\'echet differentiable, where $X$ is the subspace of $H^2(I)$ consisting of functions with zero derivative at the boundary points $x = \pm L/2$. Let $A \in \mathcal{L}(X,L^2(I))$ be defined by
\[Aw = \frac{-1}{\mu}(\dot{\mu}w')' + \frac{\dot{\mu}}{\mu^2}(\mu w')' + \dot{\widetilde V}w\] 
and
\[(D_hf)(x,t) = \frac{f(x,t + h) - f(x,t)}{h}, \quad \text{for } h > 0,\]
be the difference quotient operator in $t$. Fr\'echet differentiability follows from showing $\lVert D_h L - A \rVert \to 0$ as $h \to 0$, where $\lVert \cdot \rVert$ is the operator norm on $\mathcal{L}(H^2, L^2)$.
This limit follows from a dominated convergence argument using the regularity of $\rho$ and $V$. Showing higher order Fr\'echet differentiability is similar.

Since $\lambda(t)$ is a simple eigenvalue Proposition 3.6.1 in \cite{BT} shows that there is a $C^k$-curve ($k \geq 1$) of simple eigenpairs near each $t$. It follows from the second part of Proposition 3.6.1 that there is a single smooth curve of eigenpairs for all $t \in \R$. Finally, we can renormalize the eigenfunction to have unit $L^2$-norm as necessary.

\bigskip

Now we prove the formula for the derivative of the eigenvalue. Let $\dot{f} = \frac{\partial f}{\partial t}$ denote the derivative of $f$ with respect to $t$. Recall the weak formulation of the Schr\"odinger eigenvalue problem is
\begin{equation}
\label{eq:WF}
\int_I [u' v' + W u v] \, dx = \lambda \int_I u v \, dx, \quad \text{for each } v \in H^1(I),
\end{equation}
where $W = V + \alpha(t)\delta_{-L/2} + \beta(t) \delta_{L/2}$ and $\delta_p$ denotes the Dirac delta function at $p \in [-L/2,L/2]$. We also have $\dot{W} = \dot{V} + \dot{\alpha}(t)\delta_{-L/2} + \dot{\beta}(t) \delta_{L/2}$ and observe that $\dot{u} \in H^2(I)$ so the dominated convergence theorem implies the $t$-derivative of the weak formulation is
\begin{equation}
\label{eq:DWF}
\int_I [\dot{u}' v' + \dot{W} u v + W \dot{u} v] \, dx = \dot{\lambda} \int_I u v \, dx + \lambda \int_I \dot{u} v \, dx, \quad \text{for each } v \in H^1(I).
\end{equation}

Since $u$ is $L^2$-normalized we have that $\int\dot{u} u \, dx = 0$ for each $t$. Letting $v = u$ in (\ref{eq:DWF}) and $v = \dot{u}$ in (\ref{eq:WF}) we have
\[\int_I [\dot{u}' u' + \dot{W} u^2 + W \dot{u} u] \, dx = \dot{\lambda} \quad \text{and}  \quad \int_I [u' \dot{u}' + Wu \dot{u}] \, dx = 0.\]
Combining these last two results proves the formula $\dot{\lambda} = \int_I \dot{W} u^2 \, dx$, and hence the forumla displayed in the lemma.

\end{proof}

\begin{lemma}
\label{lem:L1Concave}
If $V_0$ is non-negative and positive on a set of positive measure then $t \mapsto \lambda_1(tV_0,(\alpha,\beta))$ is strictly increasing and concave.
\end{lemma}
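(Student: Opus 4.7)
The plan is to handle concavity and strict monotonicity separately, using two different tools developed earlier in the paper.

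For concavity, I would invoke the min-max principle: the first Robin eigenvalue admits the variational characterization
\[
\lambda_1(tV_0,(\alpha,\beta)) = \inf_{\substack{u \in H^1(I) \\ \lVert u \rVert_{L^2}=1}} \Bigl\{ \int_I \bigl[(u')^2 + tV_0\,u^2\bigr] dx + \alpha\,u(-L/2)^2 + \beta\,u(L/2)^2 \Bigr\}.
\]
For each fixed admissible $u$, the expression inside the infimum is affine (in fact linear) in $t$. An infimum of a family of affine functions is concave on $\R$, so concavity of $t \mapsto \lambda_1(tV_0,(\alpha,\beta))$ is immediate.

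For strict monotonicity, I would apply Lemma \ref{lem:Deriv} with the smooth family $V(\cdot,t) = tV_0$, $\alpha(t) \equiv \alpha$, $\beta(t) \equiv \beta$. The hypotheses are satisfied because $V_0 \in L^\infty(I)$ (so $\partial_t V = V_0 \in L^\infty(I)$) and the boundary parameters are constant in $t$. The lemma then gives
\[
\frac{d}{dt}\lambda_1(tV_0,(\alpha,\beta)) = \int_I V_0\, u_1(\cdot,t)^2 \, dx,
\]
where $u_1(\cdot,t)$ is the $L^2$-normalized positive ground state. Since $u_1 > 0$ pointwise on $I$ by the standard convention (and by continuity of $u_1 \in H^2(I) \subset C(\overline{I})$), and $V_0 \geq 0$ with $V_0 > 0$ on a set of positive measure, the integrand is non-negative and strictly positive on a set of positive measure. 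Hence the derivative is strictly positive for every $t$, yielding strict monotonicity.

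The only subtle point, and what I would consider the main thing to verify, is the positivity of $u_1$ on all of $I$ (not merely in the interior) so that $u_1^2 V_0$ has a positive integral regardless of where the set $\{V_0 > 0\}$ sits inside $\overline{I}$. This follows from standard Perron-type arguments: the ground state of a Robin Schrödinger operator in one dimension does not vanish in $\overline{I}$, because a zero of $u_1$ at an interior point would contradict simplicity together with the nodal structure, and vanishing at an endpoint is incompatible with the Robin condition $u_1'(\pm L/2) = \mp \alpha\, u_1(\pm L/2)$ unless $u_1' = u_1 = 0$ there, which by uniqueness for the ODE would force $u_1 \equiv 0$. Thus $u_1^2 > 0$ on $\overline{I}$, and the derivative formula above is strictly positive, completing the proof.
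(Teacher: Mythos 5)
Your proposal is correct and follows essentially the same route as the paper: strict monotonicity via the eigenvalue derivative formula of Lemma \ref{lem:Deriv} together with positivity of the ground state, and concavity as an infimum of functions affine in $t$ via the variational characterization. (Your extra care about positivity of $u_1$ at the endpoints is harmless but not needed, since the endpoints have measure zero and $V_0>0$ already on a set of positive measure in the open interval.)
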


\begin{proof}
Let $u_t$ be an $L^2$-normalized ground state corresponding to $\lambda_1(t) = \lambda_1(tV_0,(\alpha,\beta))$. The function $t \mapsto \lambda_1(t)$ is smooth and strictly increasing by Lemma \ref{lem:Deriv} because 
\[\lambda_1'(t) = \int_I V_0 u_t^2 \, dx > 0,\] 
since $u_t^2$ is positive on $I$.

To see the concavity statement recall the variational characterization \[\lambda_1(t) = \inf R_t(u),\]
where $R_t : H^1(I) \setminus \{0\} \to \R$ is the Rayleigh quotient
\[R_t(u) = \frac{\int_I (u')^2 + t V_0 u^2 \, dx + \alpha u(-L/2)^2 + \beta u(L/2)^2}{\int_I u^2 \, dx}.\]
Hence the function $t \mapsto \lambda_1(t)$ is concave because it is the infimum of the family of linear functions $\{t \mapsto R_t(u)\}_u$ (see, for example, \cite[p.\ 153-154, item (vi) of Gloss (3.5.24)]{Thirring}).
\end{proof}

\begin{remark}
In the above proof we show $t \mapsto \lambda_1(tV_0,(\alpha,\beta))$ is a concave function, but in fact it is a strictly concave function under the hypotheses in Lemma \ref{lem:L1Concave}. This follows from the second-order perturbation formula
\[\frac{d^2}{dt^2} \lambda_1(tV_0,(\alpha,\beta)) = -2\sum_{j \geq 2} (\lambda_j - \lambda_1)^{-1} |\langle u_1,V_0 u_j \rangle|^2 < 0,\]
see \cite[p.\ 7, \S XII.1, and p.\ 21]{RS} for example.
\end{remark}


\section{\textbf{Single-well potentials with centered transition point}}

In this section we assume that $\beta = \alpha$. Since the Robin eigenvalues satisfy the scaling relation $\lambda_j(t^{-2}V(\cdot/t),\alpha/t;tI) = t^{-2}\lambda_j(V,\alpha;I)$ we can work on the interval $(-\pi/2,\pi/2)$ of length $L = \pi$. The bulk of the proof concerns the roots of a trigonometric equation that gives the eigenvalues of the Schr\"odinger operator
\[H_m = -\frac{d^2}{dx^2} + m \mathbf{1}_{(0,\pi/2)}, \quad \text{for } m \geq 0,\] 
acting on $L^2(-\pi/2,\pi/2)$ with Robin boundary conditions. In this argument we follow Horv\'ath \cite{Horvath}, making the generalizations necessary to apply his method to the Robin problem for $\alpha \geq 0$ (Horv\'ath dealt with the Dirichlet case $\alpha = \infty$). 

We denote the eigenvalues and gap for $V = 0$ by $\lambda_j(\alpha) = \lambda_j(0,\alpha)$ and $\Lambda(\alpha) = \Lambda(0,\alpha)$ for notational convenience. We will show that the eigenvalues of this operator are the real solutions of the transcendental equation 
\begin{equation}
\label{eq:trans}
    f_{\alpha}(t) = - f_{\alpha}(t - m).
\end{equation}
Here $f_{\alpha} : \R \to \widetilde \R$ is defined by
\[f_{\alpha}(t) = \sqrt{t} \, \frac{\alpha \cos(\sqrt{t} \frac{\pi}{2}) - \sqrt{t}\sin(\sqrt{t} \frac{\pi}{2})}{\sqrt{t}\cos(\sqrt{t} \frac{\pi}{2}) + \alpha \sin(\sqrt{t} \frac{\pi}{2})}, \quad \text{for } t \neq 0,\]
and $2\alpha/(\alpha \pi + 2)$ for $t = 0$, where $\widetilde \R$ is the extended real numbers $[-\infty,+\infty]$ with $-\infty$ identified with $+\infty$. In particular, $f_{\alpha}(t) \in \R$ when $t$ is negative. Note that in the limit $\alpha \to \infty$ we recover the function $f(t) = \sqrt{t} \cot(\sqrt{t} \frac{\pi}{2})$ in \cite[\S 2]{Horvath}. Let $t_j = t_j(m)$ for $j = 1,2,3, \dots$ denote the real solutions of (\ref{eq:trans}) listed in increasing order.

\begin{lemma}
\label{lem:Trig}
\hspace{0mm}\\
(i) If $\alpha \in \R$ then $t_j(m)$ is the $j^{\text{th}}$ eigenvalue of $H_m$ with Robin boundary conditions.\\ 
(ii) If $\alpha \in [0,\infty)$ and $m \geq 0$ then
\[t_2 - t_1 \geq \Lambda(0,\alpha)\]
with equality if and only if $m = 0$.
\end{lemma}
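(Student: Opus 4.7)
The plan is to prove (i) by separation of variables and (ii) by following Horv\'ath's analysis of the transcendental equation, extended from his Dirichlet case to general $\alpha \in [0, \infty)$.

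For (i), I would solve $-u'' + V u = t u$ on each half-interval $(-\pi/2, 0)$ and $(0, \pi/2)$ where $V$ is constant, using trigonometric solutions (or hyperbolic ones when the effective energy is negative). Imposing the Robin condition at $x = -\pi/2$ picks out a one-dimensional family on the left half, and a direct computation shows that the logarithmic derivative $u'(0^-)/u(0^-)$ equals $f_\alpha(t)$. A mirror computation on the right half, with effective energy $t - m$, yields $u'(0^+)/u(0^+) = -f_\alpha(t - m)$. Matching continuity of $u$ and $u'$ at $0$ gives the transcendental equation, and the standard self-adjointness and simplicity arguments then identify its real roots with the eigenvalues of $H_m$, listed in increasing order.

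For (ii), I would first handle the base case $m = 0$: the equation collapses to $f_\alpha(t) = 0$ in $\widetilde{\R}$, whose real solutions---both zeros \emph{and} poles of $f_\alpha$---are exactly the Robin eigenvalues of $H_0$, so $t_j(0) = \lambda_j(\alpha)$ and $t_2(0) - t_1(0) = \Lambda(0, \alpha)$, settling the equality case. For $m > 0$ I would analyze the equation $f_\alpha(t) + f_\alpha(t - m) = 0$ directly in the style of Horv\'ath: first localize $t_1(m)$ and $t_2(m)$ within the intervals determined by the zero-pole pattern of $f_\alpha$, and then use the monotonicity of $f_\alpha$ on those intervals to derive the lower bound on $t_2 - t_1$. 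As a parallel tool for strict inequality when $m > 0$, Lemma~\ref{lem:Deriv} supplies
\[\frac{d}{dm}(t_2 - t_1) = \int_0^{\pi/2}(u_2^2 - u_1^2)\,dx,\]
and one can hope to show this is positive for $m > 0$ by comparing the $L^2$-masses of the two eigenfunctions on the low-potential half $(-\pi/2, 0)$, using the piecewise trigonometric form from part (i).

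The main obstacle will be adapting Horv\'ath's key estimates from $\alpha = \infty$ to general $\alpha \in [0, \infty)$. In the Dirichlet limit $f_\alpha$ reduces to $\sqrt{t}\cot(\sqrt{t}\pi/2)$, with transparent zeros and poles at $\sqrt{t} \in \Z$; for finite $\alpha$ these shift with $\alpha$, the function takes the finite nonzero value $2\alpha/(\alpha\pi + 2)$ at $t = 0$, and the hyperbolic regime $t < 0$ must be tracked (for small $\alpha$, $t_1(m)$ may be negative). The technical heart of the proof is to verify that the monotonicity and trigonometric estimates underlying Horv\'ath's Dirichlet argument persist for the $\alpha$-dependent function $f_\alpha$, with strict inequality failing only at $m = 0$.
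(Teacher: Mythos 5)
Part (i) of your proposal matches the paper's argument and is fine. The gap is in part (ii), where your plan for $m>0$ is a placeholder for precisely the hard step. Two concrete problems. First, your ``parallel tool'' is to show $\frac{d}{dm}(t_2-t_1)=\int_0^{\pi/2}(u_2^2-u_1^2)\,dx>0$ for all $m>0$ by ``comparing $L^2$-masses'' from the piecewise trigonometric form; this is exactly the inequality $t_2'(m)>t_1'(m)$, and the paper does \emph{not} establish it for all $m$ (indeed it only conjectures global monotonicity of the gap, supported numerically). What the paper proves is the stronger-looking but more tractable pair $t_1'(m)\le \tfrac12 < t_2'(m)$, and only on the bounded range $m\in(0,m_0)$ with $m_0=\lambda_3(\alpha)-\lambda_1(\alpha)$: the left inequality comes from $t_1'(0)=\tfrac12$ (symmetry of the unperturbed ground state) together with concavity of $m\mapsto t_1(m)$ (Lemma \ref{lem:L1Concave}), and the right one from implicitly differentiating (\ref{eq:trans}) to get $(1-t_2')/t_2'=f_\alpha'(t_2)/f_\alpha'(t_2-m)$ and then proving $f_\alpha'(t_2-m)<f_\alpha'(t_2)$. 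For $m\ge m_0$ the paper abandons monotonicity entirely and instead bounds $t_2-t_1>\lambda_3(\alpha)-\lambda_2(\alpha)>\lambda_2(\alpha)-\lambda_1(\alpha)$ using Smits' monotonicity of the gaps in $\alpha$. Your sketch contains neither the splitting at $m_0$ nor any substitute for it.

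Second, ``localize $t_1,t_2$ by the zero--pole pattern of $f_\alpha$ and use monotonicity of $f_\alpha$'' cannot by itself produce $t_2-t_1\ge\lambda_2(\alpha)-\lambda_1(\alpha)$: bracketing the roots between consecutive zeros and poles only yields crude bounds such as $t_1<\lambda_2(\alpha)\le t_2$, which say nothing about the difference. The quantitative engine is the comparison of \emph{derivative values} $f_\alpha'(t_2-m)$ versus $f_\alpha'(t_2)$, which requires the explicit formula (\ref{eq:fDeriv}) for $f_\alpha'$, the verification that $f_\alpha'<0$ on all of $\R$ (including the hyperbolic regime $t<0$ and the point $t=0$), and a sign analysis of the three terms in (\ref{eq:ft_2}) using the relation $Q(t_2-m)=-\sqrt{t_2}\,Q(t_2)/\sqrt{t_2-m}$ and the monotonicity of $h(t)=\pi/4+\alpha/(2(t+\alpha^2))$. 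This is also where the hypothesis $\alpha\ge 0$ enters (the signs of the $\alpha$ and $\alpha^2$ terms in the numerator of $f_\alpha'$), so the restriction you flag as a loose end is in fact load-bearing. Until these estimates are carried out, your part (ii) is a statement of intent rather than a proof.
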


\begin{proof}
\textbf{Part (i)}: First assume that $m > 0$. Let $y_1(x;\lambda) = \cos(\sqrt{\lambda}x)$ and $y_2(x;\lambda) = \lambda^{-1/2} \sin(\sqrt{\lambda}x)$ for $\lambda \neq 0$, with $y_2(x;0)$ defined as a limit. The functions $y_1$ and $y_2$ form a fundamental set of solutions to $y'' + \lambda y = 0$ defined and analytic in $\lambda$ for all $\lambda \in \mathbb{C}$ and real-valued for all $\lambda \in \R$. In terms of $y_1$ and $y_2$, $f_{\alpha}$ becomes
\[f_{\alpha}(t) = \frac{y_1'(\pi/2;t) + \alpha y_2'(\pi/2;t)}{y_1(\pi/2;t) + \alpha y_2(\pi/2;t)}\]
and as such is meromorphic in $\lambda$. Thus in these terms and in general the singularity in $f_\alpha$ at $t = 0$ is only apparent, and can always be handled via a limiting argument (as done above).

An eigenfunction $u$ with eigenvalue $\lambda$ of $H_m$ is of the form
\[u(x) = \begin{cases}
A y_1(x;\lambda) + B y_2(x;\lambda), \quad &x \in (-\pi/2,0)\\
C y_1(x;\lambda-m) + D y_2(x;\lambda-m), \quad &x \in (0,\pi/2),
\end{cases}\]
for coefficients $A,B,C,D \in \R$ (not all zero). We view the sine and cosine as functions on the complex plane so that when $\lambda$ or $\lambda - m$ is negative we instead have a linear combination of hyperbolic sines and cosines.

Note that $u \in C^1(-\pi/2,\pi/2)$ since $u \in H^2_{loc}(-\pi/2,\pi/2)$ by the weak eigenequation and definition of the weak derivative. From the smoothness of $u$ and the Robin boundary conditions we will deduce that for fixed $\alpha \in \R$ the eigenvalues $\lambda$ are solutions of the transcendental equation (\ref{eq:trans}). The fact that $u \in C^1(-\pi/2,\pi/2)$ implies that the left and right limits of $u$ and $u'$ are equal at zero. It follows that
\begin{equation}
\label{eq:ABCD}
     A = C \quad \text{and} \quad B = D.
\end{equation}
The Robin boundary condition at $x = -\pi/2$ implies that
\begin{equation}
\label{eq:BCL}
\frac{B}{A} = \frac{- y_1'(-\pi/2;\lambda) + \alpha y_1(-\pi/2;\lambda)}{y_2'(-\pi/2;\lambda) - \alpha y_2(-\pi/2;\lambda)} = \sqrt{\lambda} \frac{\alpha \cos(\sqrt{\lambda} \frac{\pi}{2}) - \sqrt{\lambda}\sin(\sqrt{\lambda} \frac{\pi}{2})}{\sqrt{\lambda}\cos(\sqrt{\lambda} \frac{\pi}{2}) + \alpha \sin(\sqrt{\lambda} \frac{\pi}{2})},
\end{equation}
and the boundary condition at $x = \pi/2$ implies that
\begin{equation}
\label{eq:BCR}
\frac{D}{C} = -\frac{ y_1'(\pi/2;\lambda^*) + \alpha y_1(\pi/2;\lambda^*)}{y_2'(\pi/2;\lambda^*) + \alpha y_2(\pi/2;\lambda^*)} = -\sqrt{\lambda^*}\frac{\alpha \cos(\sqrt{\lambda^*} \frac{\pi}{2}) - \sqrt{\lambda^*}\sin(\sqrt{\lambda^*} \frac{\pi}{2})}{\sqrt{\lambda^*}\cos(\sqrt{\lambda^*} \frac{\pi}{2}) + \alpha \sin(\sqrt{\lambda^*} \frac{\pi}{2})},
\end{equation}
where $\lambda^* = \lambda - m$.

The relations in (\ref{eq:ABCD}) imply that $B /A= D/C$ so that we can equate the right sides of (\ref{eq:BCL}) and (\ref{eq:BCR}) to obtain the transcendental equation (\ref{eq:trans}) for $t \neq 0,m$ (with the cases $t = 0,m$ following as limits). 


This calculation shows that each eigenvalue of $H_m$ is a solution of (\ref{eq:trans}) for $m > 0$. Conversely, each solution of (\ref{eq:trans}) gives a $\lambda$ such that $u$ is an eigenfunction of $H$ with Robin boundary conditions. It follows that the Robin eigenvalues of $H_m$ are in 1-1 correspondence with the solutions of (\ref{eq:trans}).

Finally, assume that $m = 0$. Observe that $f_{\alpha}(t)$ is finite except when $-\sqrt{t} \cot(\sqrt{t}\pi/2) = \alpha$. The solutions of this equation are the eigenvalues with associated eigenfunction that has odd symmetry, or equivalently, the eigenvalues $\lambda_{2j}(\alpha)$ for $j \in \N$. Similarly, the zeros of the numerator of $f_{\alpha}$ are the eigenvalues $\lambda_{2j - 1}(\alpha)$ for $j \in \N$. It follows that the solutions of $f_{\alpha}(t) = -f_{\alpha}(t)$ are the eigenvalues of $H_m$ when $m = 0$.

\bigskip

\textbf{Part (ii)}: We show the spectral gap $\Lambda(m \mathbf{1}_{(0,\pi/2)},\alpha) = (t_2 - t_1)(m)$ is a strictly increasing function of $m$ for $m \in (0,m_0]$ and that on $(m_0,\infty)$ the spectral gap is strictly larger than $\Lambda(\alpha)$, where $m_0 = t_2^{-1}(\lambda_3(\alpha)) > 0$.

To see that $m_0$ is well-defined first note that each $t_j$ is strictly increasing in $m$ since the potential is strictly increasing pointwise in $m$ for $x \in (0,\pi/2)$. Additionally, $t_2(m) \nearrow \lambda_{4}(\alpha)$ as $m \to \infty$ since $\lambda_2(\alpha) \leq t_2(m) < \lambda_4(\alpha)$ is continuous and when $t = t_2(m)$ the right side of (\ref{eq:trans}) tends to $- \infty$ as $m \to \infty$ as $t_2(m)$ is uniformly bounded. Alternatively, one can view $m \to \infty$ as the ``infinite potential-well" limit so that $\lambda_j(m \mathbf{1}_{(0,\pi/2)},\alpha;(-\pi/2,\pi/2)) \to \lambda_j(0,(\alpha,\infty);(-\pi/2,0)) = \lambda_{2j}(0,\alpha;(-\pi/2,\pi/2))$ as $m \to \infty$. Hence, $m \mapsto t_j(m)$ is an invertible function on $[0,\infty)$ and $m_0 = t_2^{-1}(\lambda_3(\alpha))$ exists in $(0,\infty)$. 

In fact, $m_0$ is given explicitly by $m_0 = \lambda_3(\alpha) - \lambda_1(\alpha)$. This follows because the choices $t = \lambda_3(\alpha), m = \lambda_3(\alpha) - \lambda_1(\alpha),$ give $f_\alpha(t) = f_\alpha(\lambda_3(\alpha)) = 0$ and also $f_\alpha(t - m) = f_\alpha(\lambda_1(\alpha)) = 0$.

\bigskip

\noindent \underline{Reduction to $m \in (0,m_0)$}: First we deduce that $t_2 - t_1 > \Lambda(\alpha)$ when $m \geq m_0$. In this case, $t_2(m) \geq \lambda_3(\alpha)$ and $t_1(m) < \lambda_1(0,(\alpha,\infty);(-\pi/2,0))$ so that
\[t_2(m) - t_1(m) > \lambda_3(\alpha) - \lambda_1(0,(\alpha,\infty);(-\pi/2,0)) = \lambda_3(\alpha) - \lambda_2(\alpha) > \lambda_2(\alpha) - \lambda_1(\alpha) = \Lambda(\alpha).\]
The final inequality follows from the monotonicity of the gaps with respect to the Robin parameter \cite[\S 4]{Smits}: $\lambda_3(\alpha) - \lambda_2(\alpha) \geq \lambda_3(0) - \lambda_2(0) = 3$ and $\lambda_2(\alpha) - \lambda_1(\alpha) < \lambda_2(\infty) - \lambda_1(\infty) = 3$.

\bigskip

\noindent \underline{$t_2 - t_1$ strictly increasing on $[0,m_0]$}: We begin by proving that $f'_{\alpha} < 0$ on each connected component of $f_{\alpha}^{-1}((-\infty,\infty)) = \R \setminus \{\lambda_{2j}(\alpha) : j \in \N\}$. It is useful to view
\[Q(t) = \frac{-S(t)}{C(t)} = \frac{\alpha \cos(\sqrt{t} \frac{\pi}{2}) - \sqrt{t}\sin(\sqrt{t} \frac{\pi}{2})}{\sqrt{t}\cos(\sqrt{t} \frac{\pi}{2}) + \alpha \sin(\sqrt{t} \frac{\pi}{2})},\]
as a generalized tangent function (consider $\alpha = 0$). Note that $f_{\alpha}(t) = \sqrt{t}Q(t)$ so that
\[f_{\alpha}'(t) = \frac{1}{2 \sqrt{t}} Q(t) + \sqrt{t} Q'(t).\]

A direct calculation shows that
\begin{equation}
\label{eq:DerivQ}
Q'(t) = \frac{-S'(t)C(t) + S(t)C'(t)}{C(t)^2} = -\frac{1}{4\sqrt{t}}\frac{\alpha(\alpha \pi + 2) + \pi t}{C(t)^2}
\end{equation}
so that
\[f'_{\alpha}(t) = \frac{-1}{2 \sqrt{t}} \frac{S(t)}{C(t)} - \frac{1}{4}\frac{\alpha(\alpha \pi + 2) + \pi t}{C(t)^2} = \frac{-2S(t)C(t) - \alpha(\alpha \pi + 2) \sqrt{t} - \pi t \sqrt{t}}{4 \sqrt{t}C(t)^2}.\]

Using the ``double-angle" trigonometric identities we have
\begin{align*}
    -2S(t)C(t) =& 2 \alpha \sqrt{t} \bigg(\cos^2\Big(\sqrt{t}\frac{\pi}{2} \Big) - \sin^2 \Big(\sqrt{t} \frac{\pi}{2} \Big) \bigg) - 2(t - \alpha^2)\sin\Big(\sqrt{t} \frac{\pi}{2} \Big)\cos\Big(\sqrt{t} \frac{\pi}{2}\Big)\\
    =& 2 \alpha \sqrt{t} \cos(\sqrt{t}\pi) - (t - \alpha^2)\sin(\sqrt{t}\pi).
\end{align*}
Thus, 
\begin{align}
\label{eq:fDeriv}
    f'_{\alpha}(t) &= \frac{(2\alpha\cos(\sqrt{t}\pi) - \alpha(\alpha \pi + 2))\sqrt{t} - ((t - \alpha^2)\sin(\sqrt{t} \pi) + \pi t \sqrt{t})}{4 \sqrt{t} C(t)^2}\nonumber \\
    &= -\frac{2 \alpha \sqrt{t}(1 - \cos(\sqrt{t} \pi)) + \alpha^2(\sqrt{t}\pi - \sin(\sqrt{t}\pi)) + t(\sqrt{t} \pi + \sin(\sqrt{t} \pi))}{4 \sqrt{t} C(t)^2}.
\end{align}
When $t > 0$ using that $\cos(\cdot) \leq 1$ and that $|\sin(\sqrt{t} \pi)| < \sqrt{t} \pi$ in (\ref{eq:fDeriv}) we have (recall that $\alpha \geq 0$)
\[f'_{\alpha}(t) \leq \frac{-t(\sqrt{t} \pi + \sin(\sqrt{t} \pi))}{4 \sqrt{t} C(t)^2} \leq \frac{-t(\sqrt{t} \pi - |\sin(\sqrt{t} \pi)|)}{4 \sqrt{t} C(t)^2} < 0.\]

Formula (\ref{eq:fDeriv}) actually holds for all $t \in \mathbb{C}$. This means we can take $t$ to be a negative real number. Hence, we use the identities $\cos(i \theta) = \cosh(\theta)$ and $\sin(i \theta) = i\sinh(\theta)$ to see that
\begin{equation*}
f'_{\alpha}(t) = -\frac{2 \alpha \sqrt{-t}(\cosh(\sqrt{-t} \pi) -1) + \alpha^2(\sinh(\sqrt{-t} \pi) - \sqrt{-t}\pi) + (-t)(\sqrt{-t} \pi + \sinh(\sqrt{-t} \pi))}{4 \sqrt{-t} (\sqrt{-t}\cosh(\sqrt{-t} \frac{\pi}{2}) + \alpha \sinh(\sqrt{-t} \frac{\pi}{2}))^2}
\end{equation*}
holds for $t < 0$. To see that $f_{\alpha}'(t) < 0$ for $t<0$, notice that $\cosh(\cdot) > 1$ and $\sinh(\sqrt{-t}\pi) > \sqrt{-t} \pi$ so that 
\[f'_{\alpha}(t) < -\frac{(-t)(\sqrt{-t} \pi + \sinh(\sqrt{-t} \pi)) )}{4 \sqrt{-t} (\sqrt{-t}\cosh(\sqrt{-t} \frac{\pi}{2}) + \alpha \sinh(\sqrt{-t} \frac{\pi}{2}))^2} < 0, \quad \text{for } t < 0.\]
And finally, for $t = 0$ one can take the limit to see that $f'_\alpha(0) = -\pi(\pi^2 \alpha^2 + 6 \pi \alpha + 12)/[6 (\pi \alpha + 2)^2] < 0$.

Monotonicity of the gap in $m$ will follow from showing 
\begin{equation}
\label{eq:dti}
\frac{dt_1}{dm} \leq \frac{1}{2} < \frac{dt_2}{dm}, \quad \text{on } (0,m_0).
\end{equation}
To prove the left inequality of (\ref{eq:dti}) (which actually holds for all $m > 0$) observe that we can compute that $t_1'(0) = 1/2$ (via perturbation theory) since  when $m = 0$ the ground state is symmetric and $L^2$-normalized. The inequality follows because Lemma \ref{lem:L1Concave} shows that $m \mapsto t_1(m)$ is a concave function.

The right inequality in (\ref{eq:dti}) is more delicate. Take $t = t_2(m)$ in (\ref{eq:trans}) and differentiate implicitly with respect to $m$ to find
\[\frac{1 - t_2'(m)}{t_2'(m)} = \frac{f_{\alpha}'(t_2(m))}{f_{\alpha}'(t_2(m) - m)}.\]
From this formula and that $f_{\alpha}' < 0$ it follows that the right inequality of (\ref{eq:dti}) is equivalent to
\begin{equation*}
f_{\alpha}'(t_2 - m) < f_{\alpha}'(t_2),
\end{equation*}
which we prove now.

It follows from (\ref{eq:DerivQ}) that
\begin{equation}
\label{eq:Derivf}
    f'_{\alpha}(t) = \frac{1}{2 \sqrt{t}} Q(t) - \frac{\alpha(\alpha \pi + 2) + \pi t}{4(t + \alpha^2)} \big( 1 + Q(t)^2 \big)
\end{equation}
since $C(t)^2 + S(t)^2 = t + \alpha^2$ so that $C(t)^{-2} = (t + \alpha^2)^{-1} \big( 1+ (S(t)/C(t))^2 \big)$. The transcendental equation $f_{\alpha}(t_2) = -f_{\alpha}(t_2 - m)$ implies that 
\[Q(t_2 - m) = -\frac{\sqrt{t_2}}{\sqrt{t_2 - m}} Q(t_2).\]
Using this and (\ref{eq:Derivf}) we can compute
\[f_{\alpha}'(t_2 - m) =  -\frac{\sqrt{t_2}}{2(t_2 - m)}Q(t_2) - h(t_2-m) \Big( 1 + \frac{t_2}{t_2 - m} Q(t_2)^2 \Big),\]
where 
\[h(t) = \frac{\alpha(\alpha \pi + 2) + \pi t}{4(t + \alpha^2)} = \frac{\pi}{4} + \frac{\alpha}{2(t + \alpha^2)}\]
and
\begin{equation}
\label{eq:ft_2}
f_{\alpha}'(t_2 - m) - f_{\alpha}'(t_2) 
= \frac{m-2t_2}{2(t_2 - m)\sqrt{t_2}}Q(t_2) - \big(h(t_2 - m) - h(t_2)\big) - \bigg(\frac{t_2}{t_2 - m}h(t_2 - m) - h(t_2) \bigg)Q(t_2)^2.
\end{equation}

The roots of $S(t)$ and $C(t)$ are, respectively, the eigenvalues $\lambda_{2j-1}(\alpha)$ and $\lambda_{2j}(\alpha)$ for $j \in \N$ so that $f_{\alpha}(t)$ and $Q(t)$ are positive for $t \in (\lambda_2(\alpha),\lambda_3(\alpha))$ since they are decreasing. In particular, this shows that $f_{\alpha}(t_2) > 0$ when $m \in (0,m_0)$.

We know that $t_2 > \lambda_1(\alpha) + m$ since otherwise $t_2 \leq \lambda_1(\alpha) + m$ so that $-f_{\alpha}(t_2 - m) \leq 0$ but $-f_{\alpha}(t_2 - m) = f_{\alpha}(t_2) > 0$. This shows $t_2 -m > 0$ and $m - 2t_2 < 0$ so that the first term is negative since $Q(t_2) > 0$. The second term is less than or equal to zero since $h' \leq 0$ on $(-\alpha^2,\infty)$ when $\alpha \geq 0$. Finally, the third term is negative since $h > 0, h' \leq 0,$ and $t_2 - m > 0$. 
\end{proof}

\subsection*{Proof of Theorem \ref{thm:Horvath}}
The case $\alpha = \infty$ was proved by Horv\'ath in \cite{Horvath} so assume $\alpha \in [0,\infty)$. Since the eigenvalues are invariant under translating the interval and satisfy the scaling relation $\lambda_j(t^{-2}V(\cdot/t),\alpha/t;tI) = t^{-2}\lambda_j(V,\alpha;I)$ it suffices to carry out this calculation on the interval $(-\pi/2,\pi/2)$ of length $L = \pi$.  

We first reduce from general $V$ to step function potentials of the form
\begin{equation}
\label{eq:StepPot}
    m \mathbf{1}_{(0,\pi/2)}, \quad \text{for } m \geq 0.
\end{equation}
It follows from taking $\underline{\alpha} = \overline{\alpha} = \alpha = \underline{\beta} = \overline{\beta}$ in Lemma \ref{lem:MinEx} that there exists a minimizer $V_0$ of $\Lambda(V,\alpha)$ in the class of potentials
\[\mathcal{S}_M = \{V \in L^{\infty}(-\pi/2,\pi/2) : 0 \leq V \leq M,~ V \text{ single-well potential with transition point } \tau=0\}.\]
To see that a minimizing potential is of the form (\ref{eq:StepPot}) we consider two cases and proceed by a perturbation argument in each case. Let $x_-$ and $x_+$ be as in Lemma \ref{lem:Wrskn} for the potential $V_0$, which we assume to be a minimizing potential.

\bigskip

\noindent \textbf{Case 1: ($x_- \leq 0 < x_+$):} Define the piecewise constant potential
\begin{equation*}
V_1(x) = 
    \begin{cases}
        V_0(x_-), \quad &\text{for } x \in (-\pi/2,0],\\
        V_0(x_+), \quad &\text{for } x\in (0,\pi/2],
    \end{cases}
\end{equation*}
and consider the family of potentials
\[V(x,t) = (1 - t)V_0(x) + t V_1(x), \quad \text{for each } t \in [0,1].\]
Since $V(\cdot, t) \in \mathcal{S}_M$ for each $t$, minimality of $V_0$ implies that
\begin{equation}
\label{eq:GapDeriv}
    \frac{d \Lambda}{d t}\bigg|_{t = 0} = \int_{-\pi/2}^{\pi/2} (-V_0 + V_1) (u_2^2 - u_1^2) \, dx \geq 0.
\end{equation}
It follows from the definition of $x_-$ and $x_+$ that the integrand is pointwise non-positive. This implies that $V_0 = V_1$, except possibly at the transition point $x = 0$. When $x_+ = 0$ a similar argument works.

\bigskip

\noindent \textbf{Case 2: ($0 < x_-$):} We show that this case is impossible. First we reduce to the step function potential
\begin{equation*}
V_1(x) = 
    \begin{cases}
        V_0(0) , \quad &\text{for } x \in (-\pi/2,x_-],\\
        V_0(x_+), \quad &\text{for } x \in (x_-,\pi/2),
    \end{cases}
\end{equation*}
via a perturbation argument. Consider the family of potentials
\[V(x,t) = (1 - t)V_0(x) + t V_1(x), \quad \text{for } t \in [0,1].\]

Since $V(\cdot,t) \in \mathcal{S}_M$ for each $t$, minimality of $V_0$ implies that (\ref{eq:GapDeriv}) holds again. It follows from the definitions of $x_-$ and $x_+$ that the integrand is non-positive pointwise so that $V_0 = V_1$ everywhere, except possibly at the transition point $x = 0$. Since $V_0 - \inf_I \{V_0\} \in \Ss_M $, without loss of generality we can take $V_0 = m \mathbf{1}_{(x_-,\pi/2)}$ for some $m \geq 0$. When $x_+ < 0$ a similar argument works by symmetry.

We use the notation $\lambda_j(V,(\alpha,\beta);J)$ to denote the eigenvalues of the Schr\"odinger operator with potential $V$ and Robin boundary conditions $\alpha$ at the left endpoint and $\beta$ at the right on an interval $J$. Using that $x_- > 0$ and $x_0 \in (x_-,\pi/2)$ is the unique zero of $u_2$ we have that
\begin{align*}
\lambda_2(m \mathbf{1}_{(x_-,\pi/2)},\alpha;(-\pi/2,\pi/2)) = &\lambda_1\big(m \mathbf{1}_{(x_-,x_0)},(\alpha,\infty);(-\pi/2,x_0)\big)\\
< &\lambda_1\big(0,(\alpha,\infty);(-\pi/2,0)\big) = \lambda_2\big(0,\alpha;(-\pi/2,\pi/2)\big).
\end{align*}
The strict inequality follows from a trial function argument and that the corresponding eigenfunctions cannot coincide on $(-\pi/2,0)$. This is a contradiction since $\lambda_2(m \mathbf{1}_{(x_-,\pi/2)},\alpha) > \lambda_2(0,\alpha)$ due to monotonicity of the eigenvalues with respect to $m$. 

\bigskip

This contradiction shows that only Case 1 can occur so the the optimal potential must be a step function of the form (\ref{eq:StepPot}) (shifting $V_0$ again as necessary). Lemma \ref{lem:Trig} shows that among potentials in $\mathcal{S}_M$ the minimum is attained if and only if $m = 0$. To conclude, let $V$ be an arbitrary bounded single-well potential with transition point $\tau = 0$. Since $V - c \in \mathcal{S}_M$ for $c = \inf_I\{V\}$ and $M$ sufficiently large both the inequality and the characterization of equality hold for each such $V$.

\bigskip

Now we show the inequality $\Lambda(V,\alpha) \geq \Lambda(0,\alpha)$ can fail to hold if $V$ is a single-well potential but has a transition point $\tau \in [-L/2,L/2]$ that is not $0$ by a perturbation argument for $\alpha \in \R$. Let $u_1$ and $u_2$ denote the $L^2$-normalized eigenfunctions of the Schr\"odinger operator with $V = 0$. Without loss of generality, we can assume that $\tau \in [-L/2,0)$ by symmetry. When $\tau \in (-L/2,0)$ consider the family of potentials $V(x,t) = t \mathbf{1}_{(\tau,L/2)}(x) 
$ and note that
\[\frac{d}{d t} \Lambda(V(x,t),\alpha) = \int_{-L/2}^{L/2} \frac{dV}{dt} (u_2^2 - u_1^2) \, dx = \int_{\tau}^{L/2} (u_2^2 - u_1^2) \, dx = \int_{\tau}^0 (u_2^2 - u_1^2) \, dx < 0.\] The equality on the right follows from $u_j^2$ being even and $L^2$-normalized and the inequality follows from Lemma \ref{lem:Wrskn}. When $\tau = -L/2$ consider $V(x,t) = t \mathbf{1}_{(x_-,L/2)}$, where $x_- \in (-L/2,0)$ and $u_2^2 < u_1^2$ on $(x_-,0]$, as given by the symmetric case of Lemma \ref{lem:Wrskn}. Similarly, we have  
\[\frac{d}{d t} \Lambda(V(x,t),\alpha) = \int_{x_-}^{L/2} (u_2^2 - u_1^2) \, dx  = \int_{x_-}^0 (u_2^2 - u_1^2) \, dx < 0. \qed\]

\subsection{Gap numerics}
\label{sec:Num}

\begin{figure}
    \begin{minipage}{.48\textwidth}
    \center
    \includegraphics[scale = .55]{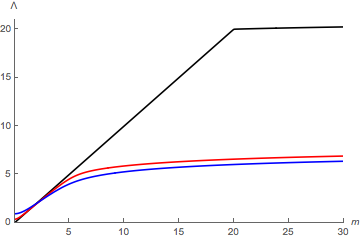}
    \end{minipage}
    \begin{minipage}{.48\textwidth}
    \center
    \includegraphics[scale = .55]{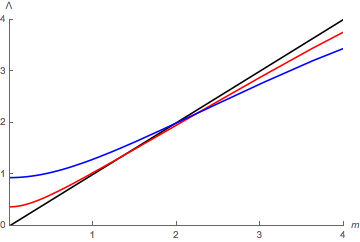}
    \end{minipage}
    \caption{The gap $\Lambda(m \boldsymbol{1}_{(0,\pi/2)},\alpha)$ plotted as a function of $m$ on the intervals $[0,30]$ and $[0,4]$ for $\alpha = -2,-1,-0.1$. The values $\Lambda(0,\alpha)$ increase with $\alpha$. Note the the curves cross because $\alpha \mapsto \Lambda(m \boldsymbol{1}_{(0,\pi/2)},\alpha)$ is not monotone for $\alpha < 0$.
}
    \label{fig:NegGapM}
\end{figure}

\begin{figure}[t]
\begin{minipage}[t]{0.48\textwidth}
    \centering
    \includegraphics[scale = .55]{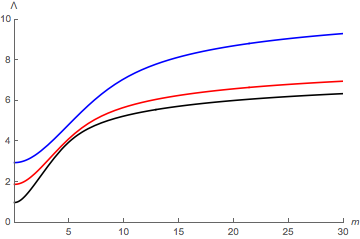}
    \caption{The gap $\Lambda(m \boldsymbol{1}_{(0,\pi/2)},\alpha)$ plotted as a function of $m$ for $\alpha = 0,2,100$. The values $\Lambda(0,\alpha)$ increase with $\alpha$.}
    \label{fig:PosGapM}
\end{minipage}
~
\begin{minipage}[t]{0.48\textwidth}
    \centering
    \includegraphics[scale = .5]{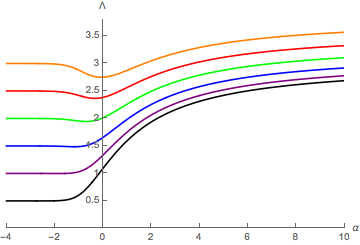}
    \caption{The gap $\Lambda(m \boldsymbol{1}_{(0,\pi/2)},\alpha)$ plotted as a function of $\alpha$ with $m = k/2$ for $1 \leq k \leq 6$. The values $\Lambda(m \boldsymbol{1}_{(0,\pi/2)},0)$ increase with $m$.}
    \label{fig:GapAlphaPlot}
\end{minipage}
\end{figure}
Now we discuss the numerical computation of the smallest two roots of the transcendental equation (\ref{eq:trans}) and hence the gap $\Lambda(m \boldsymbol{1}_{(0,\pi/2)},\alpha)$. All computations were performed in Mathematica Version 12.0.0.0.

Our proof of Theorem \ref{thm:Horvath} does not handle the case when $\alpha < 0$. This is because Lemma \ref{lem:Trig} fails to show that the gap $m \mapsto \Lambda(m \boldsymbol{1}_{(0,\pi/2)},\alpha)$ is minimal for $m = 0$ when $\alpha < 0$. Nonetheless, the numerical computation of $m \mapsto \Lambda(m \boldsymbol{1}_{(0,\pi/2)},\alpha)$ in Figures \ref{fig:NegGapM} and $\ref{fig:PosGapM}$ suggests the gap is actually monotone for all $m \geq 0$ and for each $\alpha \in (-\infty,\infty]$. This lends support to Conjecture \ref{conj:AllAlpha}.

We also plot the gap $\Lambda(m \boldsymbol{1}_{(0,\pi/2)},\alpha)$ as a function of $\alpha$ in Figure \ref{fig:GapAlphaPlot}. When $m$ is large the plot shows that $\alpha \mapsto \Lambda(m \boldsymbol{1}_{(0,\pi/2)},\alpha)$ is monotone only for $\alpha \geq 0$. This phenomenon appears to continue for small $m$ but is more subtle in this regime. This example shows there are asymmetric potentials for which the gap is monotone for $\alpha \geq 0$, but fails to be monotone for $\alpha < 0$. Hence, Open Problem \ref{op:mono} is only stated for $\alpha \geq 0$.


\section{\textbf{Symmetric single well --- proof of Theorem \ref{thm:SWS}}} 

In this section assume that $\beta = \alpha$. Define $V(x,t) =  S(x) + t V(x)$ for $t \in [0,1]$ so that $V(x,0) = S(x)$ and $V(x,1) = S(x) + V(x)$. Lemma \ref{lem:Wrskn} implies that there is a point $x_-(t) \in (-L/2,0)$ such that 
\begin{gather}
\label{eq:SSWineq}
\begin{split}
u_2(x,t)^2 > u_1(x,t)^2 ~ \text{for} ~ x \in (-L/2,x_-(t)) \cup (- x_-(t),L/2), ~ \text{and}\\
u_2(x,t)^2 < u_1(x,t)^2 ~ \text{for} ~ x \in (x_-(t),-x_-(t)) ~ \text{for each} ~ t \in [0,1].
\end{split}
\end{gather}
To prove the theorem first we compute derivatives of the gap in $t$ for fixed $\alpha$ using Lemma \ref{lem:Deriv} to find
\[\frac{d\Lambda}{dt}(t) = \int_I \frac{\partial V}{\partial t} (u_2^2 - u_1^2) \, dx = \int_I V(x)(u_2(x)^2 - u_1(x)^2) \, dx.\]
Since $u_2^2 - u_1^2$ has mean zero we can replace $V(x)$ by $V(x) - V(x_-(t))$ in the integral on the right. Since $V$ is symmetric single-well and (\ref{eq:SSWineq}) holds it follows that the integral is non-negative and zero if and only if $V$ is constant. 

We can compute the $\alpha$-derivative of the gap for fixed $t$ and $\alpha \in \R$
\[\frac{d\Lambda}{d \alpha} = u_2(-L/2)^2 + u_2(L/2)^2 - (u_1(-L/2)^2 + u_1(L/2)^2),\]
which is non-negative by (\ref{eq:SSWineq}). To see that it is positive it suffices to show that $u_2(-L/2)^2 > u_1(-L/2)^2$ by symmetry. We have $u_2(-L/2)^2 > u_1(-L/2)^2$ since the proof of Lemma \ref{lem:Wrskn} shows that $(u_2/u_1)(x)$ is decreasing and strictly larger than $1$ at $x = -L/2$. This proves $\frac{d \Lambda}{d \alpha} > 0$ for all $\alpha \in \R$. Since $\Lambda(S + tV,\alpha)$ is defined and continuous for $\alpha \in (-\infty,\infty]$ it follows that it is strictly increasing on $(-\infty,\infty]$. \qed

\begin{rem}
Theorem \ref{thm:SWS} can also be proved by adapting the trial function technique used by Ashbaugh and Benguria in \cite{AB}. This technique has the advantage that it gives a lower bound for the difference $\Lambda(S + V, \alpha + \beta) - \Lambda(S,\alpha)$ in terms of an integral of a combination of the relevant eigenfunctions. At the moment it is unclear how to use this to produce a lower bound that is explicit.
\end{rem}


\section{\textbf{Convex potentials}}

The proof Theorem \ref{thm:VConvNew} proceeds by first proving that gap has a minimizer in $\C$  for appropriate choices of $M,\underline{\alpha} = \min\{\alpha,\beta\},\overline{\alpha},\underline{\beta},$ and $\overline{\beta}$. Following this we show that the minimizer is a potential-boundary condition pair of the form $(ax,(\underline{\alpha},\underline{\alpha}))$. To conclude we use that among linear potentials with fixed symmetric Robin parameters the gap is minimized by the constant potential (see \cite[Theorem 1.3]{ACH2}). The following lemma can be found in the work of Andrews, Clutterbuck, and Hauer \cite[Lemma 5.1]{ACH2} for the case $\beta = \alpha$.

\begin{lemma}
\label{lem:u1u2}
Assume that $u_1$ and $u_2$ are $L^2$-normalized first and second eigenfunctions of $-\frac{d^2}{dx^2} + ax$ with Robin boundary conditions $(\alpha,\beta) \in \R^2 $. If $a$ is such that $\int_0^R x(u_2^2 - u_1^2) \, dx = 0$ then $u_2(-L/2)^2 > u_1(-L/2)^2$ and $u_2(L/2)^2 > u_1(L/2)^2$.
\end{lemma}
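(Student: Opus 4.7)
The plan is to combine the sign structure of $\phi := u_2^2 - u_1^2$ given by Lemma~\ref{lem:Wrskn} with its vanishing zeroth and first moments on $I = (-L/2,L/2)$. $L^2$-normalization of $u_1,u_2$ yields $\int_I \phi \, dx = 0$, while the hypothesis provides $\int_I x\phi\, dx = 0$. By Lemma~\ref{lem:Wrskn} there exist points $-L/2 \leq x_- < x_0 < x_+ \leq L/2$ with $\phi > 0$ on $(-L/2,x_-) \cup (x_+,L/2)$ and $\phi < 0$ on $(x_-,x_+)$. The desired conclusion $u_2(\pm L/2)^2 > u_1(\pm L/2)^2$ is equivalent to the strict inequalities $-L/2 < x_-$ and $x_+ < L/2$, i.e.\ that $\phi$ has exactly two sign changes in $I$.

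The main step is to rule out the one-sign-change configurations. Assume for contradiction that $x_+ = L/2$, so $\phi > 0$ on $(-L/2,x_-)$ and $\phi < 0$ on $(x_-,L/2)$. Combining the two moment identities produces
\[ 0 = \int_I (x - x_-)\phi(x)\, dx. \]
On $(-L/2,x_-)$ the factor $(x-x_-)$ is non-positive and $\phi$ is positive, while on $(x_-,L/2)$ the factor is non-negative and $\phi$ is negative; hence the integrand is pointwise non-positive and strictly negative on a set of positive measure, which contradicts the identity. The symmetric case $x_- = -L/2$ is excluded by the analogous argument applied to $(x - x_+)\phi$, whose integral would be strictly positive.

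The zero-sign-change scenario $\phi$ of constant sign on $I$ is incompatible with $\int_I \phi = 0$ unless $\phi \equiv 0$, which is ruled out by orthogonality of $u_1$ and $u_2$. Hence exactly two sign changes occur with $-L/2 < x_- < x_+ < L/2$, and $\phi > 0$ in neighborhoods of both endpoints. Continuity of $u_1, u_2$ (from $H^2 \hookrightarrow C^1$ in one dimension) then delivers the boundary inequalities. The only substantive step is the moment sign-counting argument; once the two-sign-change structure of $\phi$ is established, the strict boundary positivity is immediate, and the proof makes no use of the specific form $V = ax$ beyond what is already packaged in Lemma~\ref{lem:Wrskn}.
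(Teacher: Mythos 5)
Your proof is correct and follows essentially the same route as the paper's: assume an endpoint inequality fails, use Lemma~\ref{lem:Wrskn} to place $x_+ = L/2$ (or $x_- = -L/2$), and combine the mean-zero property of $u_2^2-u_1^2$ with the vanishing first moment to get the sign contradiction from $\int_I (x-x_-)(u_2^2-u_1^2)\,dx$. The only cosmetic difference is that the paper dispatches the second endpoint case by a reflection/WLOG rather than repeating the argument with $(x-x_+)$, and the strictness of the boundary inequalities is most cleanly justified by the strict monotonicity of $u_2/u_1$ from the proof of Lemma~\ref{lem:Wrskn} rather than continuity alone.
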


\begin{proof}
Suppose, for the sake of contradiction, that either $u_2(-L/2)^2 \leq u_1(-L/2)^2$ or $u_2(L/2)^2 \leq u_1(L/2)^2$. Without loss of generality, we may assume the latter inequality holds by reflecting the potential and swapping $\alpha$ with $\beta$, if necessary. It follows that $x_- \in (-L/2,L/2)$ and $x_+ = L/2$. Since $u_2^2 - u_1^2$ has mean zero we have that
\[0 = \int_{-L/2}^{L/2} x(u_2^2 - u_1^2) \, dx = \int_{-L/2}^{L/2} (x - x_-)(u_2^2 - u_1^2) \, dx < 0,\]
since $u_2^2 - u_1^2 > 0$ on $[-L/2,x_-)$ and $u_2^2 - u_1^2 < 0$ on $(x_-,L/2]$. This is a contradiction so we must conclude that $u_2(-L/2)^2 > u_1(-L/2)^2$ and $u_2(L/2)^2 > u_1(L/2)^2$.
\end{proof}

\subsection*{Proof of Theorem \ref{thm:VConvNew}}

Since the gap $\Lambda(V,(\alpha,\beta))$ is invariant under adding a constant to $V$ we can assume that $V \geq 0$.

First assume that $\alpha,\beta \in [-1/L,\infty)$, that is, $\alpha,\beta \neq \infty$. To prove the inequality we first note that there is a $(V_0,\alpha_0,\beta_0) \in \C$ (see (\ref{eq:ConvSet}) for a definition) that minimizes $\Lambda(V,(\alpha,\beta))$ by taking $\underline{\alpha} = \underline{\beta} = \min\{\alpha,\beta\}, \overline{\alpha} = \overline{\beta} = \max\{\alpha,\beta\},$ and $M = \max_I\{V\}$ in Lemma \ref{lem:MinEx}. Next we argue that a minimizer is of the form $(ax+b,(\underline{\alpha},\underline{\alpha}))$ for some $a,b \in \R$. To do so, first we make a perturbation of the potential to conclude that it must be linear and then we make a perturbation of the boundary parameters to conclude that they must be as small as possible.

Consider the family of convex potentials
\[V(x,t) = (1 - t) V_0(x) + t V_1(x), \quad \text{ for each } t \in [0,1],\]
where $V_1(x)$ is the linear function with graph that intersects the coordinates $(x_-,V_0(x_-))$ and $(x_+,V_0(x_+))$ in $\R^2$. Using Lemma \ref{lem:Deriv} we can compute the derivative of the gap and use minimality of $V_0$ to find
\[\frac{d \Lambda}{d t} \bigg|_{t  = 0} = \int_I \frac{dV}{dt} \bigg|_{t = 0} (u_2^2 - u_1^2) \, dx \geq 0.\]
Using that $\frac{dV}{dt}|_{t = 0} = V_1(x) - V_0(x) \geq 0$ on $(x_-,x_+)$ and non-positive on the complement we have that $V_0(x) = V_1(x) = ax + b$ for each $x \in I$ with $a,b \in \R$ by using Lemma \ref{lem:Wrskn}.
Since the eigenvalues are invariant under adding a constant to the potential we may assume that $V_0(x) = ax$ for some $a \in \R$. 

Minimality of $V_0$ also implies that
the derivative of the gap with respect to $a$ is zero so that 
\[0 = \frac{d}{da} \Lambda(ax,(\alpha_0,\beta_0)) = \int_I x(u_2^2 - u_1^2) \, dx.\] 
It follows from Lemma \ref{lem:u1u2} that $u_2(\pm L/2)^2 > u_1(\pm L/2)^2$ so that Lemma \ref{lem:Deriv} implies we can decrease the larger of the two Robin parameters while decreasing the gap. By minimality of the gap we conclude that $\alpha_0 = \min\{\alpha,\beta\} = \beta_0$. Thus, $\Lambda(ax,(\alpha_0,\beta_0)) \geq \Lambda(ax,\underline{\alpha})$.

To complete the proof we use that among linear potentials the gap is minimized by the constant one \cite[Theorem 1.3]{ACH2} so that $\Lambda(V_0,\underline{\alpha}) \geq \Lambda(0,\underline{\alpha})$ since $\underline{\alpha} \geq -1/L$. Thus, we have proved
\[\Lambda(V,(\alpha,\beta)) \geq \Lambda(0,\underline{\alpha}).\]

To include the possibility that $\alpha$ or $\beta$ (or both) is infinity, we note that, for example, if $\alpha = \infty$ and $\beta < \infty$ then the inequality continues to hold since $\lambda_j(V,(\alpha,\beta)) \to \lambda_j(\infty,\beta)$ as $\alpha \to \infty$. Similar observations allow us to conclude the proof in all cases. \qed


\section*{\textbf{Acknowledgements}}
Derek Kielty gratefully acknowledges support from the University of Illinois Campus Research Board award RB19045 (to Richard Laugesen) and travel support from the University of Missouri and the University of Illinois at Urbana-Champaign. We would also like to thank the American Institute of Mathematics and specifically Evans Harrell, David Krej\v ci\v r\'ik, and Vladimir Lotoreichik for hosting and organizing the ``Shape optimization with surface interactions" workshop \cite{AIM2} where our collaboration began. We also thank Richard Laugesen for helpful comments and suggestions as this research progressed.

\bibliographystyle{plain}
\bibliography{refs}

\end{document}